\documentclass[10pt]{amsart}
\usepackage{amsmath, amscd, amssymb}
\usepackage[frame,cmtip,arrow,matrix,line,graph,curve]{xy}
\usepackage{graphpap, color,bbm, verbatim}
\usepackage[mathscr]{eucal}


\newcommand{\CC}{\mathbb{C}}

\newcommand{\PP}{\mathbb{P}}

\newcommand{\ZZ}{\mathbb{Z}}


\newcommand{\bP}{\mathbf{P}}

\newcommand{\bM}{\mathbf{M}}


\newcommand{\cal}{\mathcal}

\def\cN{{\cal N}}
\def\cO{{\cal O}}

\def\cU{{\cal U}}



\newcommand{\rH}{\mathrm{H} }

\def\PP{\mathbb{P}}
\def\CC{\mathbb{C}}
\def\lr{\rightarrow}


\newtheorem{prop}{Proposition}[section]
\newtheorem{theo}[prop]{Theorem}
\newtheorem{lemm}[prop]{Lemma}
\newtheorem{coro}[prop]{Corollary}
\newtheorem{rema}[prop]{Remark}
\newtheorem{defi}[prop]{Definition}

\newtheorem{defiprop}[prop]{Definition-Proposition}

\DeclareMathOperator{\Ext}{Ext} 
\DeclareMathOperator{\Hom}{Hom} 
\newcommand{\ses}[3]{0\lr{#1}\lr{#2}\lr{#3}\lr 0}

\DeclareMathOperator{\Pic}{Pic}
\def\deg{\mathrm{deg}}

\def\Hom{\mathrm{Hom}}
\def\Ker{\mathrm{Ker}}
\def\Coker{\mathrm{Coker}}


\def\rH{\mathrm{H}}


\def\bM{\mathbf{M}}

\def\bP{\mathbf{P}}

\title[Stable maps in the vector bundles space]{Stable maps of genus zero in the space of stable vector bundles on a curve}

\author{Kiryong Chung*}\thanks{* Corresponding author.}
\address{Department of Mathematics Education, Kyungpook National University, 80 Daehakro, Bukgu, Daegu 41566, Korea}
\email{krchung@knu.ac.kr}

\author{SangHyeon Lee}
\address{Department of Mathematical Sciences, Seoul National University, GwanAkRo 1, Seoul 08826, Korea}
\email{tlrehrdl@snu.ac.kr}

\keywords{Stable bundles; Modification of vector bundles; Rational curves}
\subjclass[2010]{14E05, 14H60, 14D22}
\thanks{KC was partially supported by NRF grant 2016R1D1A1B03930421.}

\begin{document}
\begin{abstract}
Let $X$ be a smooth projective curve with genus $g\geq3$.
Let $\mathcal{N}$ be the moduli space of stable rank two vector bundles on $X$ with a fixed determinant $\mathcal{O}_X(-x)$ for $x\in X$.
In this paper, as a generalization of Kiem and Castravet's works, we study the stable maps in $\mathcal{N}$ with genus $0$ and degree $3$.
Let $P$ be a natural closed subvariety of $\mathcal{N}$ which parametrizes stable vector bundles with a fixed subbundle $L^{-1}(-x)$ for a line bundle $L$ on $X$. 
We describe the stable map space $\mathbf{M}_0(P,3)$. It turns out that  the space $\mathbf{M}_0(P,3)$ consists of two irreducible components. One of them parameterizes smooth rational cubic curves and the other parameterizes the union of line and smooth conics.
\end{abstract}
\maketitle
\section{Introduction}
\subsection{Stable maps in the space of stable vector bundles}
Let $i:Y\subset \PP^r$ be a smooth projective variety with $i^*\cO_{\PP^r}(1)=\cO_Y(1)$. Let $\bM_0(Y,d)$ be the moduli space parameterizing stable maps $[f:C\lr Y]$ with genus $g(C)=0$ and degree $\mathrm{deg}(f):=d$. It is well-known that the moduli space $\bM_0(Y,d)$ is compact and its geometry has been studied in various contexts: enumerative geometry (\cite{FP97, KP01}) and birational geometry (\cite{CC10}). 
In this paper, as a continuation of \cite[\S 3]{Kie07}, we study the moduli space of stable maps of degree $3$ when the target $Y=\cN$ is the moduli space of stable vector bundles of rank two over a smooth curve $X$. Here $\cN$ parameterizes the Gieseker-Mumford stable vector bundles $E$ of rank $2$ with fixed determinant $\mathrm{det}(E)=\cO_X(-x)$ on a smooth projective curve $X$ (\S 2.1). It is well-known that the ample theta divisor $\Theta$ generates the Picard group $\mathrm{Pic}(\cN)$ (\cite{BV99}). For the projective embedding $i:\cN\subset \PP^N$ provided by the divisor $\Theta$ on $\cN$, one can consider the moduli space $\bM_0(\cN,d)$ of stable maps with degree $d$.
As the degree of the maps become larger, the moduli space $\bM_0(\cN,d)$ may have many different irreducible components. We study the problem from lower degree cases. Let us summarize the well-known results for $d\leq2$.
\begin{itemize}
\item When $d=1$, the moduli space is isomorphic to a projective bundle over the Picard group $\mathrm{Pic}^0(X)$ (\cite{Kil98, Mun99}).
\item When $d=2$, the moduli space $\bM_0(\cN,2)$ consists of two irreducible components: Hecke curves and the rational curves of extension types where they transversally intersect. Furthermore, the moduli space $\bM_0(\cN,2)$ is related to the \emph{Hilbert scheme} of conics in $\cN$ by using the birational morphisms with some geometric meaningful centers (\cite{Kie07}).
\end{itemize}
When $d=3$, it is quite natural to start with finding the possible irreducible components of the moduli space $\bM_0(\cN,3)$. In this direction, in \cite{Cas04, Kie07}, the authors found the two components parametrizing the degree 3 map $f : \PP^1 \to \cN$.  For $L\in \text{Pic}^k(X)$, let $\PP_L:=\PP\Ext^1(L,L^{-1}(-x))$ be the space of non-split extensions \[\ses{L^{-1}(-x)}{E}{L}.\]
By the functoriality of the moduli space $\cN$, we have a natural rational map
$$
\Psi_L:\PP_L \dashrightarrow \cN.
$$
Two components parametrize one of the following types of degree 3 maps $f:\PP^1 \to \cN$. (\cite[Lemma 4.10]{Cas04}, \cite[Proposition 3.9]{Kie07}).
\begin{itemize}
\item[i)] When $k=0$, $f$ is a composition of degree 3 maps $\PP^1 \to \PP\Ext^1(L,L^{-1}(-x))\cong \PP^{g-1}$ with the map $\Psi_L$. In this case $\Psi_L$ is a linear embedding (i.e., $\mathrm{deg}\Psi_L=1$). Also it is well known that every $(g-1)$-dimensional linear space arise in this fashion. Hence one of the components of $\bM_0(\cN,3)$ is isomorphic to the relative stable map space $\bM_0(\PP\cU/\text{Pic}^0(X),3)$ where $\cU$ is the universal rank $g$ bundle over $\text{Pic}^0(X)$. Thus, the method in \cite{CK11, CHK12} comparing the various compactifications of rational curves can be applied.
\item[ii)] When $k=1$, $f$ is a composition of the degree one map $\PP^1 \to \PP\Ext^1(L,L^{-1}(-x))\cong \PP^{g+1}$ (i.e., line) with the map $\Psi_L$. In this case, $\mathrm{deg}\Psi_L=3$. But the base locus of the map $\Psi_L$  may not be empty. Moreover, the rational map $\Psi_L$ may not be injective. 
\end{itemize}
In fact, the base locus $\mathrm{Bs}(\Psi_L)$ is isomorphic to $X$ from \cite{Tha94, Ber92}. Also, the resolution of the undefined locus of the rational map $\Psi_L$ is the first blow-up $\pi: M_1=bl_X\PP^{g+1}\lr \PP^{g+1}=\Ext^1(L, L^{-1}(-x))$ in \cite{Tha94} (Proposition \ref{mainprop}). Here the space $M_1$ is the moduli space of pairs on $X$ (For definition, see \cite{Tha94}). 
If $L$ is \emph{non-trisecant} (Definition \ref{nontri}), the regular morphism $M_1\lr \cN$ is a closed embedding and thus the study of stable maps in $\cN$ boils down to the study of stable maps in $M_1$.
\subsection{Main results}
Let $L$ be a non-trisecant line bundle on $X$ (Definition \ref{nontri}). Let $ \pi^*[\mathrm{line}]:=\beta\in \rH_2(\text{bl}_X \PP^{g+1})$ be the pull-back of the line class along the blow-up map $\pi: bl_X\PP^{g+1}\lr \PP^{g+1}$. Then we have a closed embedding $$\bM_0(bl_X\PP^{g+1}, \beta) \hookrightarrow \bM_0(\cN,3)$$ between the moduli spaces of stable maps (Proposition \ref{mainprop}). There may be many irreducible components in $\bM_0(bl_X\PP^{g+1}, \beta)$ (cf. \cite{KP01, Gat96, KLO07}). 
One of the obvious components of the moduli space $\bM_0(bl_X\PP^{g+1}, \beta)$ is the closure of the locus of the lines $l\subset \PP^{g+1}$ such that $l\cap \mathrm{Bs}(\Psi_L)=\emptyset$ (i.e., an open subset of $Gr(2,g+2)$). Let us denote it by $\Gamma_1$. There is another irreducible component in the moduli space $\bM_0(bl_X\PP^{g+1}, \beta)$ which arises from the blowing-up $\pi$. The component consists of stable maps with reducible domain whose image is the union of a line and conic such that the conic lies in the exceptional locus of the blow-up. Let us denote it by $\Gamma_2$. One of our main goals of this paper is to prove that these are all of the components. That is,
\begin{theo}[Theorem \ref{comp}]
Under the above notations, the moduli space $\bM_0(bl_X\PP^{g+1}, \beta)$ consists of the two irreducible components $\overline{\Gamma}_1$ and $\overline{\Gamma}_2$ of dimension $2g$. Furthermore, the intersection part $\overline{\Gamma}_1\cap \overline{\Gamma}_2$ is $2g-1$-dimensional irreducible space.
\end{theo}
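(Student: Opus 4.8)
The plan is to analyze a stable map $[f:C\to \mathrm{bl}_X\PP^{g+1}]$ of class $\beta$ through its composition with the blow-down $\pi$. Since $\pi_*\beta=[\mathrm{line}]$, the image $\pi(f(C))$ is a line $l\subset\PP^{g+1}$, and I would first record the two relevant generators of $N_1$: the class $\beta$ of the strict transform of a general line, and the class $\ell$ of a line in a fibre of the exceptional divisor $E=\PP(N_{X/\PP^{g+1}})$, with intersection numbers $E\cdot\beta=0$, $E\cdot\ell=-1$, $\pi^*H\cdot\beta=1$, $\pi^*H\cdot\ell=0$. A clean first input is the virtual dimension: from $-K=(g+2)\pi^*H-(g-1)E$ one gets $-K\cdot\beta=g+2$ and $\mathrm{vdim}\,\bM_0=(g+2)+(g+1)-3=2g$, so by the general lower bound for the dimension of components of a space of genus-$0$ stable maps, every component of $\bM_0(\mathrm{bl}_X\PP^{g+1},\beta)$ has dimension $\geq 2g$. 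This reduces the theorem to exhibiting exactly two irreducible families of dimension $\leq 2g$ whose union contains all stable maps.

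On the open locus where $l\cap X=\varnothing$ the strict transform is isomorphic to $l$ and carries class $\beta$; stability forbids extra contracted components, so these maps form $\Gamma_1$, an open subset of $Gr(2,g+2)$, hence irreducible of dimension $2g$, which is the component of smooth rational cubics in $\cN$. When $l$ meets $X$, the non-trisecant hypothesis (Definition \ref{nontri}) restricts $l\cap X$ to at most two points, and the strict transform $\tilde l$ has class $\beta-(E\cdot\tilde l)\ell$; to recover the class $\beta$ the domain must acquire components mapping into fibres of $E$ carrying the missing multiple of $\ell$. The geometrically meaningful family among these — where the exceptional component maps to a smooth conic in $\cN$ — is $\Gamma_2$. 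I would realise $\overline{\Gamma}_2$ as the image of an explicit irreducible incidence variety fibred over $X$ (a point of $X$, a line through it, together with the exceptional conic datum), check that this image has dimension $2g$, and identify its general member with a reducible stable map of the stated line $\cup$ conic type.

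The crux is completeness: every stable map must lie in $\overline{\Gamma}_1\cup\overline{\Gamma}_2$. I would stratify the remaining maps by the combinatorial type of the domain and by the scheme $l\cap X$, and bound each stratum. Maps with a contracted rational tail, with several exceptional bubbles, or over a line meeting $X$ with tangency form loci of dimension strictly below $2g$, and I would exhibit each as a limit inside $\overline{\Gamma}_1$ or $\overline{\Gamma}_2$ by degenerating the line or sweeping the exceptional fibre; the non-trisecant hypothesis is exactly what removes an a priori extra stratum of lines trisecant to $X$. Since $\mathrm{vdim}=2g$ excludes any genuine component of smaller dimension, and $\overline{\Gamma}_1,\overline{\Gamma}_2$ are distinct irreducible $2g$-dimensional loci (their general members have irreducible, respectively reducible, domain, so neither contains the other), these two are precisely the irreducible components.

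Finally, a map lies in $\overline{\Gamma}_1\cap\overline{\Gamma}_2$ if and only if it is a limit of smooth cubics which is simultaneously of boundary type; I would show this forces $l$ to meet $X$ in a single point $p$, with strict transform $\tilde l$ (class $\beta-\ell$) attached at $r=\tilde l\cap E_p$ to a line $m\subset E_p\cong\PP^{g-1}$ through $r$ (class $\ell$). This locus is dominated by the irreducible incidence variety $\{(p,l,m):p\in X,\ p\in l,\ r\in m\subset E_p\}$, whose dimension is $1+g+(g-2)=2g-1$, and irreducibility follows since it fibres with irreducible fibres $\PP^g$ and $\PP^{g-2}$ over the irreducible curve $X$. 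The main obstacle I anticipate is precisely this completeness step: controlling all degenerate configurations over the exceptional divisor — in particular ruling out non-reduced or multiply-bubbled exceptional curves as separate components and verifying that they specialise into $\overline{\Gamma}_2$ — which requires a careful local deformation analysis of maps meeting $E$, exploiting that $E\to X$ is a projective bundle together with the non-trisecant condition.
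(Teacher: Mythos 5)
Your component-count argument follows essentially the same route as the paper: stratify stable maps of class $\beta$ by the scheme $l\cap X$ of the underlying line, bound the dimension of each stratum, and use the expected-dimension lower bound $2g$ (the paper gets this from \cite{KL13}) to conclude that only the two $2g$-dimensional strata can generate components. That half is sound, up to one slip: your incidence variety for $\overline{\Gamma}_2$, ``a point of $X$, a line through it, together with the exceptional conic datum,'' has dimension $1+g+(2g-2)=3g-1$, not $2g$, and in fact does not even carry class $\beta$: a line meeting $X$ at a single point has strict transform of class $\beta-\ell$, so attaching a conic (class $2\ell$) yields $\beta+\ell$. The correct parameter space is fibred over $X\times X\setminus\Delta$ (the two intersection points determine the line), with fibre the degree-two stable maps in $\PP^{g-1}$ through the attachment point, giving $2+(2g-2)=2g$, as in Lemma \ref{clfy}.

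The genuine error is your identification of $\overline{\Gamma}_1\cap\overline{\Gamma}_2$. You claim it is the locus of maps whose line meets $X$ at a \emph{single} point $p$, with strict transform (class $\beta-\ell$) glued to a line in $E_p$ (class $\ell$) --- type (2) of Lemma \ref{clfy}. Such maps do not lie in $\overline{\Gamma}_2$ at all, so this identification cannot be proved. Indeed, in any limit of maps in $\Gamma_2$, the unique component of $\pi$-degree one maps isomorphically onto the strict transform of the limiting line $l_0$; since ``$\mathrm{length}(l\cap X)\geq 2$'' is a closed condition on $Gr(2,g+2)$ and $L$ is non-trisecant, $l_0$ meets $X$ in a scheme of length exactly two, so the line component of \emph{every} member of $\overline{\Gamma}_2$ has class $\beta-2\ell$, never $\beta-\ell$; meanwhile the exceptional components, of total class $2\ell$, remain attached over a single point of $X$. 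Hence $\overline{\Gamma}_2$ contains only types (4) and (5) and their degenerations, and the intersection is in fact the type (5) locus (tangent line plus a degree-two stable map in one fibre), which is what the paper proves. Your direct limit analysis also cannot be repaired without the two tools the paper uses to pin the intersection down: (i) $\bM_0(\bP,\beta)$ is a local complete intersection of pure dimension $2g$ (via \cite{KL13}), so Hartshorne's connectedness theorem \cite{Har62} forces $\overline{\Gamma}_1\cap\overline{\Gamma}_2$ to be nonempty and pure of dimension $2g-1$; and (ii) the unobstructedness statement $\rH^1(N_{C/\bP})=0$ for $C$ a line union a smooth conic (Proposition \ref{obst}), which shows those maps are smooth points of the moduli space and hence lie on only one component, eliminating the open part of $\Gamma_2$; since the singular-conic locus of type (4) has dimension only $2g-2$, what remains is exactly the irreducible, $(2g-1)$-dimensional type (5) locus. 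That your proposed (incorrect) locus also happens to have dimension $2g-1$ makes the final numerics agree, but the proof is invalid.
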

In fact, we can describe the moduli points in $\overline{\Gamma}_1\cap \overline{\Gamma}_2$ with the deformation theory of maps (\S 3).

Regarding enumerative geometry, the moduli space of stable maps in the blown-up space of the projective space has been intensively studied in \cite{KLO07} and \cite{Gat96}.
\subsection{Notations} Throughout this article, we use the following notation.
\begin{itemize}
\item $X$: projective curve of genus $g\geq 3$.
\item $x$: a fixed point of $X$.
\item $V_L:=\Ext^1(L,L^{-1}(-x))$, $V_L^{s}$ is the stable part of $\Ext^1(L,L^{-1}(-x))$.
\item If there is no need to emphasize a line bundle $L\in \Pic^1(X)$(respectively, $L \in \Pic^0(X)$) and extension groups, we sometimes abbreviate $\PP V_L$ by $\PP^{g+1}$ (respectively, $\PP^{g-1}$). And we sometimes abbreviate stable locus of $\PP V_L:=\PP V_L^s$ by $\PP^s$, when $L \in \Pic^1(X)$ in the same situation.  
\end{itemize}

\medskip
\textbf{Acknowledgement.}
We would like to thank W. Lee and A. Iliev for valuable discussion and comments. The second named author is grateful to his thesis advisor, Young-Hoon Kiem.


\section{Review of the resolution of unstable bundles}

\subsection{Stable bundles and stable maps}
Let $\mu (E):=\mathrm{deg}(E)/\mathrm{rank}(E)$ be the slope of the vector bundle $E$ on $X$. A vector bundle $E$ is called \emph{stable} if $$\mu(F)<\mu(E)$$ for all non-zero, proper, subbundles $F\subset E$.
Let $\cN$ be the moduli space of stable rank $2$ vector bundles $E$ on $X$ with fixed determinant $\text{det}(E)\cong \cO_X(-x)$.  
Since $(\mathrm{rank}(E), \mathrm{deg}(E))=1$, from geometric invariant theory (\cite{Tha96}), it is well-known that the moduli space $\cN$ is a smooth projective variety.

On the other hand, for a smooth projective variety $Y$, let $\beta \in \rH_{2}(Y, \ZZ)$. Let $C$ be a projective connected reduced curve of genus $g(C)=0$. A map $f: C \to Y$ is called \emph{stable} if
\begin{itemize}
\item $C$ has at worst nodal singularities;
\item $|\mathrm{Aut}(f)| < \infty$.
\end{itemize}
Let $\bM_{0}(Y, \beta)$  be the moduli space of stable maps with $f_{*}[C] = \beta$. Then $\bM_{0}(Y, \beta)$ is a projective scheme (\cite[Theorem 1]{FP97}).

\subsection{Some remarks about the rational map $\Psi:\PP^{g+1}\dashrightarrow \cN$}
\begin{defiprop}(\cite{Kie07})\label{modify}
Let $E$ be a rank two vector bundle on $X$ and $p\in X$.
Let $E^{v_p}$ be the kernel of the surjective map
\begin{equation}\label{eqmod}
0\longrightarrow E^{v_p}\longrightarrow E \stackrel{v_p}{\longrightarrow} \CC_p\longrightarrow 0.
\end{equation}
Then $E^{v_p}$ is a vector bundle. Let $E^{v_p}$ be \emph{an elementary modification} of the bundle $E$ at $p$.
\end{defiprop}
Since $\Hom(E,\CC_p)=\Hom(E|_p,\CC_p)=\CC^2$ and $\Ker (v_p) = \Ker (\lambda \cdot v_p)$ for all $\lambda \in \CC^*$, we assume that $v_p \in (\CC^2\setminus \{0\})/\CC^* \cong \PP^1$.
Let $E=\zeta\oplus \zeta'$ for line bundles $\zeta$ and $\zeta'$ on $X$. If $v_p \in \CC^*=\PP^1\setminus \{[1:0], [0:1]\}$, one can easily see that the modified bundles $E^{v_p}$ are isomorphic to each other. Hence, from now on, let us denote
\begin{defi}\label{smodify}
Let us define by
\[
(\zeta\oplus \zeta')^p:= (\zeta \oplus \zeta')^{v_p}
\]
for all $v_p\in \CC^*$.
\end{defi}
Let $K_X$ be the canonical line bundle on $X$.
\begin{lemm}(cf. \cite[(3.4)]{Tha94} and \cite[\S 3]{Ber92})\label{setmap}
Let $$f:X \to \PP^{g+1}=\PP \Ext^1(L,L^{-1}(-x)),\quad p\mapsto (L\oplus L^{-1}(p-x))^p$$ be the map provided by the elementray modification. Then $f$ is equal to the map given by the complete linear system $$i=|L^2(x)\otimes K_X|:X\subset \PP^{g+1}.$$
\end{lemm}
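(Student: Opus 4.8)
The plan is to interpret both maps as morphisms into $\PP H^0(L^2(x)\otimes K_X)^*$ via Serre duality and then to identify the two point-assignments. First I would record the numerology: since $\Ext^1(L,L^{-1}(-x))=H^1(X,L^{-2}(-x))$, Serre duality gives $\Ext^1(L,L^{-1}(-x))\cong H^0(X,L^2(x)\otimes K_X)^*$, and Riemann--Roch together with $\deg(L^2(x)\otimes K_X)=2g+1$ (so that $h^1=0$) yields $h^0(L^2(x)\otimes K_X)=g+2$, matching the target $\PP^{g+1}$. Since $\deg(L^2(x)\otimes K_X)=2g+1$ the linear system is very ample, so $i$ is the embedding $p\mapsto[\mathrm{ev}_p]$, where $\mathrm{ev}_p\in H^0(L^2(x)\otimes K_X)^*$ is evaluation at $p$ (well defined up to a choice of trivialization of the fiber at $p$). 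Under the Serre-duality identification, proving $f=i$ thus reduces to the single claim that the extension class $e(p)\in\Ext^1(L,L^{-1}(-x))$ of $(L\oplus L^{-1}(p-x))^p$ is proportional to $\mathrm{ev}_p$.

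Next I would exhibit $e(p)$ as a genuine extension class. Writing $E=L\oplus L^{-1}(p-x)$ and choosing $v_p\in\CC^*$ as in Definition \ref{smodify}, the subsheaf $L^{-1}(-x)\subset L^{-1}(p-x)$ of sections vanishing at $p$ lies in $\ker(v_p)=E^{v_p}$, and a direct computation of the quotient $E^{v_p}/L^{-1}(-x)$ (using that $v_p$ is nonzero on both summands at $p$) identifies it with $L$. This produces the sequence $\ses{L^{-1}(-x)}{E^{v_p}}{L}$ and hence the class $e(p)$, the sequence being non-split precisely when $e(p)\neq0$, which the residue computation below confirms. The determinant bookkeeping $\det E^{v_p}=\cO_X(-x)$ checks that we are in the correct moduli problem.

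The main step is to compute $e(p)$ against sections via the Serre pairing. I would use the Čech cover $U_0=X\setminus\{p\}$ and a coordinate disk $U_1\ni p$ with local parameter $t$. Over $U_0$ the extension splits through the first summand, $\sigma_0(s)=(s,0)$, because there $L^{-1}(-x)=L^{-1}(p-x)$; over $U_1$ a holomorphic splitting $\sigma_1$ must correct the second coordinate by a factor with a simple pole at $p$, forced by the condition $v_p\circ\sigma_1=0$ at $p$ together with the $(p)$-twist. The difference $\sigma_0-\sigma_1$ is then a $1$-cocycle for $L^{-2}(-x)$ of the shape $c\,t^{-1}\ell^{-2}$ on $U_0\cap U_1$, where $\ell$ trivializes $L$ and $c$ is the nonzero ratio of the two components of $v_p$. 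Cupping this cocycle with a section $s=w(t)\,\ell^2\,dt$ of $L^2(x)\otimes K_X$ near $p$ and applying the residue/trace map $H^1(X,K_X)\cong\CC$ gives $\langle e(p),s\rangle=c\cdot\mathrm{Res}_p\!\big(w(t)\,dt/t\big)=c\cdot s(p)$. Since $c$ is independent of $s$, this shows $e(p)=c\cdot\mathrm{ev}_p$, hence $[e(p)]=[\mathrm{ev}_p]$ and $f=i$.

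The hard part is precisely this residue computation: one must track the three trivializations (of $L$, of the $(p-x)$-twist near $p$, and of $K_X$) consistently so that the cup product lands on the nose on $w(0)=s(p)$, and verify that the resulting constant $c$ is nonzero and, being $s$-independent, disappears projectively. This simultaneously re-proves that $e(p)\neq0$ for every $p$ (equivalently the base-point-freeness of $|L^2(x)\otimes K_X|$, so that $f$ is a morphism) and that the construction is independent of the choice of $v_p\in\CC^*$, as required by Definition \ref{smodify}.
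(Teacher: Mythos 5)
Your proposal is correct, but it follows a genuinely different route from the paper's. The paper never invokes Serre duality: it quotes Thaddeus's description of $i$ as the map sending $p$ to the projectivization of the image of the one-dimensional space $\Ext^1(L|_p,L^{-1}(-x))$ under the map $\xi$ in the exact sequence obtained by applying $\Hom(-,L^{-1}(-x))$ to $\ses{L(-p)}{L}{L|_p}$, so that proving $f(p)=i(p)$ reduces to showing $f(p)\in \mathrm{im}(\xi)=\Ker(\gamma)$; this in turn is the purely sheaf-theoretic statement that the pull-back of the extension along $L(-p)\hookrightarrow L$ splits, $(L\oplus L^{-1}(p-x))^p\oplus_L L(-p)\cong L(-p)\oplus L^{-1}(-x)$, which the paper verifies by an elementary computation with local sections. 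Your reduction is precisely the Serre-dual formulation of this: under $\Ext^1(L,L^{-1}(-x))\cong \rH^0(L^2(x)\otimes K_X)^*$ the line $\Ker(\gamma)$ is the annihilator of the subspace of sections vanishing at $p$, i.e.\ the span of $\mathrm{ev}_p$, so your target claim $e(p)\propto \mathrm{ev}_p$ and the paper's splitting claim are equivalent statements. As for what each buys: the paper's argument needs no duality or residue bookkeeping—only a diagram chase plus the citation of \cite[(3.4)]{Tha94} for the identification of $i$—while yours is self-contained modulo standard facts (Serre duality, very ampleness in degree $2g+1$) and yields a bit more, since nonvanishing of $e(p)$ (hence base-point-freeness and independence of the choice $v_p\in\CC^*$) falls out of the same computation. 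The corresponding cost is that the entire weight of your proof rests on the trivialization-sensitive \v{C}ech/residue calculation, which you rightly flag as the hard part but only sketch; to make it complete you must carry out the frame bookkeeping explicitly (the key point being that the local frame of $L^{-1}(-x)$ near $p$ is $t$ times that of $L^{-1}(p-x)$, which is exactly where the simple pole $t^{-1}$ and the nonzero constant $c$, the ratio of the components of $v_p$, come from), and check that the cocycle of local splittings represents the Yoneda class up to a universal nonzero scalar—harmless projectively, but it should be said.
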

\begin{proof}
In \cite[(3.4)]{Tha94}, it was proved that the map $i$ is equivalent to the map $g:X \to \PP H^1(\Lambda^{-1})=M_0$ ($\Lambda=L^2(x)$) where $M_0$ is the moduli space of stable pairs on $X$. 
In fact, the map $g$ is defined by  by $\PP W_1 \to \PP H^1(L^{-2}(-x))$, where $W_1$ is a line bundle on $X$ and $g(p)=\PP H^0(L^{-2}(-x)|_p)\in \PP H^1(L^{-2}(-x))$ (the final paragraph of \cite[329p]{Tha94}). This is the projectivization of the first map $\xi$
in the following exact sequence:
\begin{equation}\label{eq5}0 \to \Ext^1(L|_p,L^{-1}(-x)) \stackrel{\xi}{\to} \Ext^1(L,L^{-1}(-x)) \stackrel{\gamma}{\to} \Ext^1(L(-p),L^{-1}(-x)) \to 0,
\end{equation}
where \eqref{eq5} is obtained by taking the functor $\Hom(-,L^{-1}(-x))$ to the short exact sequence $\ses{L(-p)}{L}{L|_p}$.
Since $\Ext^1(L|_p,L^{-1}(-x))=\CC$, to prove that $g(p)=f(p)$,  it is enough to show that $\gamma(f(p))=L(-p)\oplus L^{-1}(-x)$. That is, 
\begin{equation}\label{eq15}
(L\oplus (L^{-1}(p-x))^p\oplus_L L(-p) \cong L(-p)\oplus L^{-1}(-x),
\end{equation}
where the left hand side is defined by the pull-back:
\[\xymatrix{ 
0 \ar[r] & L^{-1}(-x) \ar[r] \ar@{=}[d] & (L\oplus L^{-1}(p-x))^p\oplus_L L(-p)   \ar[r] \ar[d] & \ar[r] L(-p) \ar@{^{(}->}[d] \ar[r] & 0 \\
0 \ar[r] & L^{-1}(-x) \ar[r] & (L\oplus L^{-1}(p-x))^p \ar[r] & L \ar[r] & 0.
}\]
One can check the isomorphism in \eqref{eq15} locally as follows. For every open set $U\subset X$,
\begin{align*}
&((L\oplus L^{-1}(p-x))^p\oplus_L L(-p))(U)\\
&=\{((s_1,s_2),s_3)|(s_1,s_2)\in ((L\oplus L^{-1}(p-x))^p(U) ,s_3\in L(-p)(U),s_1=s_3\}\\
&\cong \{(s_1,s_2)\in((L\oplus L^{-1}(p-x))^p(U)|s_1\in L(-p)\}\\
&\cong \{(s_1,s_2)\in L(U)\oplus L^{-1}(p-x)(U))|as_1(p)+bs_2(p)=0,s_1(p)=0 \} \\
&\cong \{(s_1, s_2)\in L(U)\oplus L^{-1}(p-x))(U)|s_1(p)=s_2(p)=0\}\\
&\cong \{(s_1,s_2)\in (L(-p) \oplus L^{-1}(-x))(U)\},
\end{align*}
where the third isomorphism comes from the definition of the elementray modification and the fourth comes from the choice of $v_p=[a:b]\in\CC^*$ for $ab\neq 0$.
\end{proof}
\begin{rema}
$\text{deg}(X)=2g+1$.
\end{rema}
\begin{prop}(\cite{Tha94, Ber92})\label{mainprop}
Let $$\Psi_L:\PP^{g+1}\dashrightarrow \cN$$ be the rational map defined by the extension of $L$ by $L^{-1}(-x)$. Then
\begin{enumerate}
\item the undefined locus of the rational map $\Psi_L$ is isomorphic to $X$ (Lemma \ref{setmap}) and the blowing-up of $\PP^{g+1}$ along $X$ extends to a regular morphism $$\widetilde{\Psi}_L: \text{bl}_X \PP^{g+1} (:=\bP)\longrightarrow \cN.$$
\item The exceptional divisor $\PP^{g-1}$ restricted to a fiber of the blowing-up $\pi$ is exactly the degree $0$ extension type and thus each restricted exceptional divisor is a linearly embedding into $\cN$ by $\widetilde{\Psi}_L$.
\item If $\Psi_L$ is injective and $H^0(L^2(x))=0$, then the morphism $\widetilde{\Psi}_L$ is a closed embedding.
\[
\xymatrix{
\bP\ar[dr]^{\widetilde{\Psi}_L}\ar[d]_{\pi}&\\
\PP^{g+1}\ar@{-->}[r]^{\Psi_L}&\cN.
}
\]
\end{enumerate}
\end{prop}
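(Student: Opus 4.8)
The plan is to establish the three assertions in sequence, using Lemma~\ref{setmap} to pin down the undefined locus, the elementary modification of Definition-Proposition~\ref{modify} to build the resolution, and a Kodaira--Spencer analysis of extensions for the embedding statement; throughout I would lean on the moduli-of-pairs picture of \cite{Tha94, Ber92}. For (1) I would first show that the undefined locus of $\Psi_L$ is exactly the unstable locus and that it equals $f(X)$. Given a nonsplit class $[e]$ with middle term fitting in $\ses{L^{-1}(-x)}{E}{L}$, any destabilizing sub-line-bundle $M\hookrightarrow E$ must map nontrivially to $L$, since otherwise $M\hookrightarrow L^{-1}(-x)$ forces $\deg M\leq -2$; as $E$ is nonsplit the degree-$1$ case is excluded, so $M\cong L(-p)$ of degree $0$ for some $p\in X$. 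Such a lift exists precisely when $[e]\in\Ker\gamma=\Img\xi$ in the sequence \eqref{eq5}, a single point of $\PP V_L$ which by Lemma~\ref{setmap} is $f(p)$. Hence the undefined locus is $f(X)\cong X$, embedded by $i=|L^2(x)\otimes K_X|$, which is very ample since $\deg=2g+1$. To obtain the morphism $\widetilde\Psi_L$ I would pull the universal extension back to $\bP=\mathrm{bl}_X\PP^{g+1}$ and perform an elementary modification along the exceptional divisor $\mathcal{E}$: over a normal direction at $f(p)$ the limiting unstable bundle is replaced by its modification at $p$, producing a family of stable bundles over $\bP$ whose classifying map is the desired regular extension, exactly the resolution realized by the pair space $M_1$ of \cite{Tha94}.

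For (2) I would identify the exceptional fiber explicitly. Over $p$ the isomorphism $\gamma\colon V_L/\langle f(p)\rangle\xrightarrow{\sim}\Ext^1(L(-p),L^{-1}(-x))$ carries the tangent line $df(T_pX)$ to the kernel of the natural surjection $\Ext^1(L(-p),L^{-1}(-x))\twoheadrightarrow\Ext^1(L(-p),L^{-1}(p-x))$ coming from $L^{-1}(-x)\hookrightarrow L^{-1}(p-x)$, so the normal space $N_{X/\PP^{g+1},p}$ is canonically $\Ext^1(L(-p),L^{-1}(p-x))$, of dimension $g$. Thus $\mathcal{E}_p\cong\PP^{g-1}$ is the space of degree-$0$ extensions $\ses{L^{-1}(p-x)}{E}{L(-p)}$ for $L(-p)\in\Pic^0(X)$, and $\widetilde\Psi_L|_{\mathcal{E}_p}$ is the map $\Psi_{L(-p)}$, a linear embedding by the $k=0$ case recalled in the introduction.

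For (3) I would prove $\widetilde\Psi_L$ is injective and unramified; since $\bP$ is projective this yields a closed embedding. Injectivity on $\bP\setminus\mathcal{E}$ is the hypothesis that $\Psi_L$ is injective; injectivity inside each $\mathcal{E}_p$ is part (2); and injectivity across the exceptional locus follows by recording the sub-line-bundle $L^{-1}(p-x)$ of an image bundle, the non-trisecant hypothesis ruling out the remaining coincidences. For the immersion property the decisive input is $H^0(L^2(x))=0$: applying $\Hom(-,E)$ to $\ses{L^{-1}(-x)}{E}{L}$ together with $\Hom(L^{-1}(-x),E)=\CC$ (which holds because $\Hom(L^{-1}(-x),L)=H^0(L^2(x))=0$) forces the boundary map $\Hom(L^{-1}(-x),E)\to\Ext^1(L,E)$ to vanish, so the Kodaira--Spencer map $T_{[e]}\PP V_L=V_L/\langle e\rangle\hookrightarrow\Ext^1(L,E)\to H^1(\End_0 E)$ is injective; the analogous computation for the modified family gives injectivity of the differential tangent to $\mathcal{E}$, normal to $\mathcal{E}$, and along $X$.

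The hardest step will be (3), and within it the unramifiedness along the exceptional divisor: one must control the Kodaira--Spencer map of the modified family simultaneously in all three directions and check that $H^0(L^2(x))=0$ suffices to annihilate every contribution to its kernel, while also verifying that the image bundles on $\mathcal{E}$ are never isomorphic to bundles arising from the open locus. Matching the blow-up with the pair-space resolution of \cite{Tha94} is what makes the construction and the fiberwise linearity tractable, but translating it into the precise immersion statement is where the real work lies.
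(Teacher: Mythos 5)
Your strategy runs parallel to the paper's: both routes rest on identifying $\bP$ with Thaddeus' pair space $M_1$, describing the exceptional fibers as spaces of degree-zero extensions embedded linearly, and proving (3) as injectivity plus injectivity of the differential. Your parts (1) and (2) are essentially sound (the identification of the unstable locus via $\ker\gamma$ in \eqref{eq5}, and of the normal space at $f(p)$ with $\Ext^1(L(-p),L^{-1}(p-x))$, is exactly the content the paper imports from Lemma \ref{setmap} and from Bertram). Note only that your construction of the resolved family by ``elementary modification along $\mathcal{E}$'' is loose as stated: the natural incidence locus inside $\bP\times X$ has codimension two, so this is not literally a modification along a divisor; but since you anchor the construction to $M_1$ anyway, this amounts to the same appeal to \cite{Tha94} that the paper makes.

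The genuine gap is exactly where you place it: unramifiedness of $\widetilde{\Psi}_L$ at points of the exceptional divisor. Your Kodaira--Spencer argument covers only $\bP\setminus\mathcal{E}$ --- and even there it has a small hole, since after embedding $V_L/\langle e\rangle \hookrightarrow \Ext^1(L,E)\hookrightarrow \Ext^1(E,E)$ you must still check that this image meets the scalar summand $\rH^1(\cO_X)\subset\Ext^1(E,E)$ trivially before you can conclude injectivity into $\rH^1(\End_0 E)=T_{[E]}\cN$ (this again uses $\Hom(L^{-1}(-x),L)=\rH^0(L^2(x))=0$). On $\mathcal{E}$ you offer only ``the analogous computation for the modified family,'' i.e., precisely the step you acknowledge is not done. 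The paper closes this hole with one uniform argument whose ingredients you already have in hand: $\widetilde{\Psi}_L$ is the forgetful map $(s,E)\mapsto E$ on $M_1\cong\bP$, so Thaddeus' deformation sequence for pairs
\[
0\lr \Ext^0(E,E)\lr \rH^0(E)\lr T_{[(s,E)]}\bP \stackrel{\widetilde{\Psi}_*}{\lr} T_{[E]}\cN
\]
holds at \emph{every} point of $\bP$, exceptional or not. Stability gives $\Ext^0(E,E)=\CC$, and the sequence $\ses{\cO_X}{E}{L^2(x)}$ together with $\rH^0(L^2(x))=0$ gives $\rH^0(E)=\CC$; hence the first arrow is an isomorphism, the second arrow is zero, and $\widetilde{\Psi}_*$ is injective everywhere. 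The same fact $\rH^0(E)=\CC$ yields global injectivity of $\widetilde{\Psi}_L$ at once, since a point of $M_1$ over $[E]$ is a section of $E$ up to scale (this is the paper's citation of \cite[(3.20)]{Tha94}, and it replaces your case-by-case sub-line-bundle bookkeeping, which is correct but longer). So the missing idea is not a harder computation but a change of viewpoint: exploit the pairs interpretation --- which you invoke only to build the resolution in (1) --- for the tangent-space statement as well.
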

\begin{proof}
Let us follow the notation of \cite{Tha94} by letting $\Lambda=L^2(x)$ and thus $\ses{\cO_X}{E}{L^2(x)}$. Also, $\bP\cong M_1$ where the latter space parameterizes the pairs $(s,E)$ such that $E$ is stable and $s\subset \rH^0(E)$ (\cite{Tha94}).
Part (1) comes from Lemma \ref{setmap} and \cite[(2.1)]{Tha94}. Part (2) comes from item (2) of \cite[Theorem 1]{Ber92}.
In part (3), the injectiveness of $\widetilde{\Psi}$ comes from \cite[(3.20)]{Tha94} because $\rH^0(E)=\CC$. Note that our extended map $\widetilde{\Psi}_L$ is just the forgetful map $(s,E)\mapsto E$ by forgetting the section $s\subset \rH^0(E)$. Hence the tangential map $\widetilde{\Psi}_{*} :T_{[(s,E)]} \bP \lr T_{[E]} \cN$ fits into
the exact sequence (\cite[(2.1)]{Tha94})
\[
0\lr \Ext^0(E,E)\lr H^0(E)\lr T_{[(s,E)]} \bP \stackrel{\widetilde{\Psi}_{*} }{\longrightarrow} T_{[E]} \cN.
\]
Since $\Ext^0(E,E)=\CC$ and $H^0(E)=\CC$, we conclude that the map $\widetilde{\Psi}$ is embedding. 
\end{proof}
The assumptions of the part (3) of above proposition can be satisfied by the following choice of the line bundles $L$ (cf. Lemma \ref{ninj}).
\begin{defi}\label{nontri}
The line bundle $L$ is \emph{non-trisecant} if $\rH^0(L^2(x))=0$.
\end{defi}

If $L$ satisfies the non-trisecant condition then we have the following property.
\begin{coro}(\cite[Lemma 5.1]{Hom83})\label{trisec}
\begin{itemize}\item[(a)] The smooth curve $X \subset \PP^{g+1}$ (embedded by $|L^2(x)\otimes K_X|$) has a trisecant line if and only if

\begin{equation}\label{eq1}
L^2(x)\cong\cal{O}_X(p+q+r) (\mathrm{ equivalently, } \rH^0(L^2(x))\neq0)
\end{equation}
for some $p,q,r\in X$.
In this case, there is the trisecant line such that its intersection with $X$ is $p+q+r$ (if $p=q$, then the line is tangential to $X$ at $p$ and if $p=q=r$ then the line is tangential to $X$ at $p$ and meets $p$ three times).
\item[(b)] If the curve $X$ is neither trigonal nor hyperelliptic, the points $p,q,r$ satisfying equation \eqref{eq1} are uniquely defined and thus there exists a unique trisecant line.
\end{itemize}
\end{coro}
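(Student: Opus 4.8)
The plan is to reduce the whole statement to the geometric Riemann--Roch dictionary relating collinearity of points to their failure to impose independent conditions on the embedding line bundle $D:=L^2(x)\otimes K_X$. First I would record the numerics. Since $L\in\Pic^1(X)$, we have $\deg D=2\cdot 1+1+(2g-2)=2g+1$, so $D$ is nonspecial and $h^0(D)=\deg D-g+1=g+2$, which matches the ambient $\PP^{g+1}$; moreover $\deg D\geq 2g+1$ makes $D$ very ample, and as the embedding is by the \emph{complete} linear system $X$ is linearly normal. Consequently, for any subscheme $Z\subset X$ the number of conditions that $Z$ imposes on $|D|$, namely $h^0(D)-h^0(D-Z)$, equals $1+\dim\langle Z\rangle$, the projective dimension of the linear span of $Z$ in $\PP^{g+1}$.

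For part (a) I would use that $p+q+r$ is collinear exactly when it imposes only $2$ conditions, i.e. $h^0(D-p-q-r)=g$ (two distinct points already impose $2$ conditions by very ampleness, so the only alternatives are $2$ or $3$). As $\deg(D-p-q-r)=2g-2$, Riemann--Roch gives $h^0(D-p-q-r)=g-1+h^1(D-p-q-r)$, so collinearity is equivalent to $h^1(D-p-q-r)=1$. Serre duality rewrites this group as $\rH^0\big(K_X\otimes(D-p-q-r)^{-1}\big)=\rH^0\big(\cO_X(p+q+r)\otimes L^{-2}(-x)\big)$, a degree--$0$ line bundle, which carries a nonzero section precisely when it is trivial, that is when $L^2(x)\cong\cO_X(p+q+r)$. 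Since $L^2(x)$ has degree $3$, the existence of such $p,q,r$ is exactly the condition $\rH^0(L^2(x))\neq 0$, giving the asserted equivalence in \eqref{eq1}.

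Next I would show the trisecant meets $X$ in exactly $p+q+r$ and rule out higher secancy. Because hyperplanes restrict on a line $\ell$ to the pencil $|\cO_{\PP^1}(1)|$, any subscheme supported on $\ell$ imposes at most $2$ conditions on $|D|$; hence a hypothetical length--$4$ subscheme $p+q+r+s$ on a line would give $h^0(D-p-q-r-s)\geq g$, and the same Serre--duality computation would force $h^0\big(L^{-2}(-x)(p+q+r+s)\big)\geq 2$ for a line bundle of degree $1$, whence $g=0$, contradicting $g\geq 3$. So there is no $4$--secant, and the trisecant cuts out the length--$3$ scheme $p+q+r$. The parenthetical tangency assertions are just the degenerate cases $p=q$ and $p=q=r$ of this scheme--theoretic statement, where the collinear length--$3$ cycle becomes a tangent, respectively osculating, subscheme.

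For part (b) the effective divisors with $L^2(x)\cong\cO_X(p+q+r)$ are precisely the members of the linear system $|L^2(x)|$, so uniqueness of the trisecant amounts to $h^0(L^2(x))=1$. If instead $h^0(L^2(x))\geq 2$, I would strip off the base locus $B$, of degree $b\leq 2$ (a degree--$0$ bundle cannot carry two independent sections): $b=0$ produces a base--point--free $g^1_3$ and makes $X$ trigonal, $b=1$ produces a $g^1_2$ and makes $X$ hyperelliptic, while $b=2$ is impossible as above. Thus the hypothesis that $X$ is neither trigonal nor hyperelliptic forces $h^0(L^2(x))=1$, so the divisor $p+q+r$, and with it the trisecant line, is unique. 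I expect the only genuinely delicate point to be the bookkeeping of multiplicities in the coincident cases, which the scheme--theoretic ``conditions imposed'' formulation handles uniformly.
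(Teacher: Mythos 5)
Your proof is correct, but it takes a genuinely different route from the paper's. You work on the linear-system side: geometric Riemann--Roch for the embedding bundle $D=L^2(x)\otimes K_X$, the ``conditions imposed'' dictionary $h^0(D)-h^0(D-Z)=1+\dim\langle Z\rangle$, and Serre duality to convert $h^1(D-p-q-r)=1$ into $\rH^0(L^{-2}(-x)(p+q+r))\neq 0$, i.e.\ $L^2(x)\cong\cO_X(p+q+r)$. The paper instead works on the extension side, inside $\PP V_L=\PP\Ext^1(L,L^{-1}(-x))$ itself: it applies $\Hom(-,L^{-1}(-x))$ (resp.\ $\Hom(L,-)$) to the restriction sequence $\ses{L(-p-q-r)}{L}{L|_{p+q+r}}$ and identifies the trisecant line with the projectivization of the image of the connecting map $j_1$ (equivalently $\ker j_2$), so that the key dimension count lands on $\Ext^0(L(-p-q-r),L^{-1}(-x))\cong\rH^0(L^{-2}(-x)(p+q+r))$ being one-dimensional. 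The two computations are Serre-dual to one another, so the underlying content is the same, but the packaging buys different things: the paper's version produces an explicit linear-algebraic model of the trisecant ($\PP\ker j_2\subset\PP^{g+1}$) which is reused verbatim later (in Lemma \ref{ninj}, to show the trisecant contracts under $\Psi_L$), whereas your version is more classical and self-contained, and is actually more complete on two points the paper treats tersely: you rule out quadrisecants (so the trisecant meets $X$ in exactly the length-$3$ scheme $p+q+r$, including the tangential degenerations), and in part (b) you carry out the base-locus analysis showing $h^0(L^2(x))\geq 2$ forces a $g^1_3$ or $g^1_2$, where the paper only asserts ``this means $X$ is hyperelliptic or trigonal.''
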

\begin{proof}
For part (a), let $l$ be a trisecant line. Let $l\cap X=\{p,q,r\}$(the three points $p, q, r$ may be equal to each other), then $l=\overline{pq}$. 
Since $l$ is trisecant, by the similar method in Proposition \ref{line}, the line $l$ is the projectivization of the kernel of a map $j_1$ which fits into the diagram
\begin{eqnarray}
\Ext^0(L,L^{-1}(-x)) \to \Ext^0(L(-p-q-r),L^{-1}(-x)) \to \Ext^1(L|_{p+q+r},L^{-1}(-x)) \nonumber\\ \stackrel{j_1}{\to} \Ext^1(L,L^{-1}(-x)) \to \Ext^1(L(-p-q-r),L^{-1}(-x)) \to 0.
\end{eqnarray}
Because the first term is clearly zero and dimension of the kernel is $2$, dimension of
$\Ext^0(L(-p-q-r),L^{-1}(-x))\cong \rH^0(L^{-2}(-x)\otimes\cO_X(p+q+r))$  is $1$. Therefore $L^{-2}(-x)\otimes\cO_X(p+q+r)\cong \cO_X$. So we conclude that $L^2(x)\cong \cO_X(p+q+r)$.

Conversely, let us assume that the line bundle $L$ satisfies equation \eqref{eq1}. By taking the functor $\Hom(L,-)$ in the short exact sequence $\ses{L^{-1}(-x)}{L^{-1}(p+q+r-x)}{L^{-1}(p+q+r-x)|_{p+q+r}}$, we obtain
\begin{eqnarray}\label{seq2}
\Ext^0(L,L^{-1}(p+q+r-x)) \to \Ext^0(L,L^{-1}(p+q+r-x)|_{p+q+r}) \to \Ext^1(L,L^{-1}(-x)) \nonumber\\
\stackrel{j_2}{\rightarrow} \Ext^1(L,L^{-1}(p+q+r-x)) \to  \Ext^1(L,L^{-1}(p+q+r-x)|_{p+q+r})=0.
\end{eqnarray}

Since final term is clearly zero,
$\dim(\Ext^1(L,L^{-1}(-x)))=g+2$ and $\dim\Ext^1(L,L^{-1}(p+q+r-x))=\dim\Ext^1(L,L)=g$ by \eqref{eq1}. Thus $\dim\ker \pi=2$. Also, since $j_2$ is equal to the composition of $j_1$ and an isomorphism $\Ext^1(L(-p-q-r),L^{-1}(-x))\cong \Ext^1(L,L^{-1}(p+q+r-x))$, $\ker j_2=\ker j_1$. Therefore $\ker j_2$ is exactly the same as the affine cone of the linear subspace $l:=\langle p,q,r\rangle$ in $\PP^{g+1}$. Hence the linear subspace $l$ is a line in $\PP^{g+1}$ such that $X\cap l=p+q+r$.

For part (b), suppose that there are three points $\{p',q',r'\}(\neq\{p,q,r\})$ such that $L^2(x)\cong \cal{O}_X(p'+q'+r')$. Then $\cal{O}_X(p+q+r-p'-q'-r')\cong \cal{O}_X$. This means $X$ is hyperelliptic or trigonal.
\end{proof}

\subsection{Geometry of lines in $\PP^{g+1}$ meeting X}
In this subsection, we give a description of line the $\overline{pq}:=\langle f(p),f(q) \rangle$ passing through $p,q\in X\stackrel{f}{\hookrightarrow}\PP^{g+1}=\PP V_L$ for $L\in \Pic^1(X)$.
If $p=q$, then $\overline{pp}$ denotes the projectivized tangent line of $X$ at $f(p)$.
Recall that the image $f(t)$ (Lemma \ref{setmap}) fits into the following short exact sequences:
$$
0 \to L^{-1}(-x) \to f(t) = (L \oplus L^{-1}(t-x))^t \to L \to 0.
$$
\begin{prop}\label{line}
Under the above notation, $M:=L\oplus L^{-1}(p+q-x)$. Then the line $\overline{pq}\setminus \{p,q\}$ missing the points $p$ and $q$ is
parameterized by the \emph{doubly} modified bundles $(M^{v_p})^{v_q}(=(M^{v_q})^{v_p})$ which fit into the exact sequence:
\begin{equation*}
0\longrightarrow (M^{v_p})^{v_q}\longrightarrow M\stackrel{(v_p \oplus  v_q)}{\longrightarrow} \CC_{p}\oplus\CC_{q}\longrightarrow 0.
\end{equation*}
Here $v_p \in \CC^* \subset \PP(\rH^0(M|_p)^*)= \PP^1$ and $v_q \in \CC^* \subset \PP(\rH^0(M|_q)^*)= \PP^1$.
Hence $\overline{pq}\setminus \{p,q\}=\PP V_L^s\cap\PP V_{L^{-1}(p+q-x)}$.
\end{prop}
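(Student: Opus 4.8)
The plan is to realize the line $\overline{pq}$ intrinsically as a projectivized kernel inside $V_L=\Ext^1(L,L^{-1}(-x))$, exactly in the spirit of the trisecant computation in Corollary \ref{trisec}, and then to match its points with the doubly modified bundles through a local saturation analysis. First I would apply $\Hom(-,L^{-1}(-x))$ to the short exact sequence $\ses{L(-p-q)}{L}{L|_{p+q}}$. Since $\deg L=1$, both $\Ext^0(L,L^{-1}(-x))$ and $\Ext^0(L(-p-q),L^{-1}(-x))$ vanish for degree reasons, so the long exact sequence collapses to
\begin{equation*}
0\to \Ext^1(L|_{p+q},L^{-1}(-x))\stackrel{j}{\to} V_L \stackrel{\gamma}{\to}\Ext^1(L(-p-q),L^{-1}(-x))\to 0,
\end{equation*}
with $\Ext^1(L|_{p+q},L^{-1}(-x))\cong\CC^2$. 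Thus $W:=\Img(j)=\Ker(\gamma)$ is a $2$-plane and $\PP W$ is a line. Writing the class of $f(t)$ as the image under $j$ of the generator of the $\Ext^1(\CC_t,L^{-1}(-x))$-summand shows $f(p),f(q)\in\PP W$, hence $\PP W=\overline{pq}$.

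Next I would analyze $E:=(M^{v_p})^{v_q}$ for $v_p,v_q\in\CC^*$. As $p\neq q$ the two modifications commute, $E=\Ker(M\to\CC_p\oplus\CC_q)$ sits in the stated sequence, and $\det E=\det M(-p-q)=\cO_X(-x)$. The key point is a fibrewise saturation computation: the composite $L^{-1}(-x)\hookrightarrow L^{-1}(p+q-x)\hookrightarrow M$ vanishes on the fibres at $p$ and $q$, so it factors through $E$; and because $v_p,v_q\in\CC^*$ are nonzero on the summand $L^{-1}(p+q-x)$, the intersection of $E$ with the saturation $L^{-1}(p+q-x)$ of this subsheaf in $M$ is again $L^{-1}(-x)$. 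Hence $L^{-1}(-x)$ is a subbundle of $E$ with quotient $L$, producing a class $e(v_p,v_q)\in V_L$ with $\gamma(e)=0$, i.e. $E$ lies on $\PP W=\overline{pq}$. The symmetric argument on the other summand shows $L(-p-q)\hookrightarrow E$ is saturated with quotient $L^{-1}(p+q-x)$, so $E\in\PP V_{L^{-1}(p+q-x)}$; stability for $v_p,v_q\in\CC^*$ puts $E$ in $\PP V_L^s$. A brief count of $(v_p,v_q)\in\CC^*\times\CC^*$ modulo $\mathrm{Aut}(M)$ shows this family is one-dimensional and that the two coordinate directions at $p$ or $q$ degenerate precisely to $f(p)$ and $f(q)$.

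For the reverse inclusion I would take an arbitrary stable $E$ whose class $e$ lies in $\Ker(\gamma)$. The relation $\gamma(e)=0$ means the extension $\ses{L^{-1}(-x)}{E}{L}$ becomes split after pulling back along $L(-p-q)\hookrightarrow L$, which yields a lift $L(-p-q)\hookrightarrow E$. Combined with $L^{-1}(-x)\hookrightarrow E$, this produces a sheaf map $E\to L\oplus L^{-1}(p+q-x)=M$ whose determinant is the inclusion $\cO_X(-x)\hookrightarrow\cO_X(p+q-x)$ vanishing at $p+q$; for $E$ on $\overline{pq}$ away from $f(p),f(q)$ this map is injective with cokernel $\CC_p\oplus\CC_q$, exhibiting $E$ as a double modification of $M$. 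Together with the forward inclusion this gives the asserted equality $\overline{pq}\setminus\{p,q\}=\PP V_L^s\cap\PP V_{L^{-1}(p+q-x)}$.

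I expect the main obstacle to be this reverse direction: showing that the induced map $E\to M$ degenerates to corank one at each of $p$ and $q$, so that its cokernel is exactly $\CC_p\oplus\CC_q$ rather than a longer torsion sheaf, and that the modification directions one reads off actually lie in $\CC^*$. This is precisely where stability of $E$ and the genericity hypotheses $v_p,v_q\in\CC^*$ are used essentially. A secondary, more bookkeeping difficulty is the identification in the first step of the extension class of $f(t)$ with the image under $j$ of the canonical generator of $\Ext^1(\CC_t,L^{-1}(-x))$, which is what guarantees that $\PP W$ is the honest join $\overline{f(p)f(q)}$ and not merely some line through one of the two points.
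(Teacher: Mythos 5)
Your proposal is correct, and its first half is identical to the paper's proof: both apply $\Hom(-,L^{-1}(-x))$ to $\ses{L(-p-q)}{L}{L|_{p+q}}$, identify $\overline{pq}$ with $\PP\Ext^1(L|_{p+q},L^{-1}(-x))=\PP\Ker(\gamma)$, and locate $f(p),f(q)$ on this line (you via compatibility with Lemma \ref{setmap}, the paper via the degenerate parameters $v_q=[0:1]$, resp.\ $v_p=[0:1]$). The genuine difference is the mechanism matching points of the line with modifications of $M$. The paper takes a class $E\in\Ext^1(L|_{p+q},L^{-1}(-x))$ and pushes out the extension along $L^{-1}(-x)\hookrightarrow L^{-1}(p+q-x)$; exactness of the long sequence forces the pushed-out extension to split, and the snake lemma applied to the resulting diagram \eqref{eq23} hands over an embedding into $M=L\oplus L^{-1}(p+q-x)$ with cokernel exactly $\CC_{p+q}$, so what you single out as the main obstacle (corank one at each of $p$ and $q$) comes for free from the diagram chase. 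Your route via saturation (forward) and lift-plus-determinant (reverse) also works, but in the reverse step the vanishing of the determinant at $p$ and at $q$ should be deduced from the corank statement itself --- the image of $L(-p-q)|_p$ in $E|_p$ is nonzero because stability forces $L(-p-q)$ to be saturated in $E$, and it is killed by both quotient maps --- rather than from counting sections of $\cO_X(p+q)$: for hyperelliptic $X$ that count alone does not pin the vanishing divisor down to $p+q$. Two smaller omissions: you assert but do not prove stability of the double modifications for $v_p,v_q\in\CC^*$ (this follows from Proposition \ref{mainprop}(1), since such points lie off the base locus $X$ whenever $\overline{pq}$ is not trisecant, a caveat the paper's own statement also elides), and you do not treat the tangent case $p=q$, which the paper settles by a one-line limit argument at the end of its proof.
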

\begin{proof}
By some diagram chasing, one can easily show that the doubly modified bundle is exactly the kernel of the map $v_p\oplus v_q$.

Let us describe the line $\overline{pq}$.
By taking the functor $\Hom (-, L^{-1}(x))$ into the short exact sequence $\ses{L(-p-q)}{L}{L|_{p+q}}$,
we obtain
\begin{align*}
0=\Ext^0(L(-p-q),L^{-1}(-x)) \to \Ext^1(L|_{p+q},L^{-1}(-x)) \stackrel{i}{\to} \Ext^1(L,L^{-1}(-x)) \\ \stackrel{j}{\to} \Ext^1(L(-p-q),L^{-1}(-x)) \stackrel{\varphi}{\cong} \Ext^1(L,L^{-1}(p+q-x)) \to 0
\end{align*}
where $\varphi$ is obtained from the twisting by $\cO(p+q)$ and the first equality holds for degree reasons.
We claim that
$$
\PP\Ext^1(L|_{p+q},L^{-1}(-x))=\overline{pq}.
$$
To show this, at first, we prove that the line $\PP\Ext^1(L|_{p+q},L^{-1}(-x))\subset \PP^{g+1}$ is parameterized by the elements defined by the double modifications. 
 Let $E\in \Ext^1(L|_{p+q},L^{-1}(-x))$. The image $E$ by the map $i$ fits into the short exact sequence:
\[ 0 \to L^{-1}(-x) \to i(E) \to L \to 0. \] Since $\varphi(j(i(E)))= L^{-1}(p+q-x)\oplus L$, we have the following push-out diagram.
\begin{equation}\label{eq23}
\xymatrix{0 \ar[r] & L^{-1}(-x) \ar[r] \ar@{_{(}->}[d] & i(E) \ar[r] \ar[d]^a & L \ar[r] \ar@{=}[d] & 0 \\
0 \ar[r] & L^{-1}(p+q-x) \ar[r] & L^{-1}(p+q-x)\oplus L \ar[r] & L \ar[r] & 0.}
\end{equation}
By some diagram chasing, we have the short exact sequence:
\begin{equation}\label{eq21}
0\lr i(E) \stackrel{a}{\longrightarrow} L^{-1}(p+q-x)\oplus L \stackrel{v_p \oplus v_q}{\longrightarrow} \CC_{p+q} \lr 0
\end{equation}
such that $p_1 \circ a$ is surjective where the map $p_1:L\oplus L^{-1}(p+q-x) \lr L$ is the projection into the first factor. Also it is clear that $p_1 \circ a$ is surjective if and only if $v_p \neq [1:0]$ and $v_q \neq [1:0]$.

Conversely, let us assume that the bundle $E$ fits into the sequence \eqref{eq21} such that $p_1 \circ a$ is surjective. 
Then one can easily check that the $E$ fits into the diagram \eqref{eq23} by the snake lemma.

In summary, the extensions of the form \eqref{eq21} such that $v_p \neq [1:0]$ and $v_q \neq [1:0]$
represent elements in $\Ext^1(L|_{p+q},L^{-1}(-x))$. 
Secondly, one can easily show that  the modified bundle $f(p)$ (resp. $f(q)$) fits into the diagram in \eqref{eq21} if $v_q=[0:1]$ and $v_p \in \CC^{*}$ (resp. $v_p=[0:1]$ and $v_q\in \CC^{*}$). 
Hence if $p\neq q$, the two distinct points $f(p)$ and $f(q)$ lie on the line $\PP\Ext^1(L|_{p+q},L^{-1}(-x))$ which is equal to $\overline{pq}$. 

Since the extension $\Ext^1(L|_{2p},L^{-1}(-x))$ can be obtained as the limit of the extension $\Ext^1(L|_{p+q},L^{-1}(-x) )$ by letting $p \to q$,  the same result holds for the case $p=q$.
\end{proof}
Now we describe the set-theoretic intersections of the vector bundles provided by the extensions:
\begin{enumerate}
\item $\PP V_{\zeta}^s \cap \PP V_{\eta}$ for $\zeta \in \Pic^1(X)$ and $\eta \in \Pic^0(X)$;
\item $\PP V_{\zeta} \cap \PP V_{\eta}$ for $\zeta,\eta \in \Pic^0(X)$.
\end{enumerate}
We remark that all of these intersections are considered in the moduli space $\cN$. Case (2) has been already studied in \cite[6.19]{NR78}. They meet at a single point cleanly unless the intersection is empty. Hence we focus on case (1).
\begin{prop}\label{intersec}
Let $\zeta\in \Pic^1(X)$ and $\eta\in \Pic^0(X)$. Then
the intersection $\PP V_{\zeta}^s \cap \PP V_{\eta}$ in $\cN$  is one of the following:
\begin{itemize}
\item[i)] If $\zeta\otimes \eta \cong \cO(p+q-x)$,  then $\PP V_{\zeta}^s \cap \PP V_{\eta}$ is equal to the image of $\overline{pq}\setminus \{p,q\}$ in $\cN$. 
\item[ii)] Otherwise, $\PP V_{\zeta}^s \cap \PP V_{\eta} = \phi$
\end{itemize}
\end{prop}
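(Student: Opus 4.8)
The plan is to obtain part (i) as an immediate specialization of Proposition \ref{line} and to settle part (ii) by a one-line degree obstruction coming from the non-effectivity of a degree-two line bundle.

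For part (i), assume $\zeta\otimes\eta\cong\cO(p+q-x)$. Taking $L=\zeta$ in Proposition \ref{line}, the hypothesis reads $\eta\cong L^{-1}(p+q-x)$ (and indeed $\deg\eta=0$), so that proposition identifies, inside $\cN$, the locus $\overline{pq}\setminus\{p,q\}$ with $\PP V_\zeta^s\cap\PP V_{\zeta^{-1}(p+q-x)}=\PP V_\zeta^s\cap\PP V_\eta$. This is exactly the assertion of (i), so nothing more is needed.

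For part (ii), I would first observe that the alternative hypothesis --- $\zeta\otimes\eta\not\cong\cO(p+q-x)$ for every $p,q\in X$ --- says precisely that the degree-two line bundle $\zeta\eta(x)$ has no section, i.e. $H^0(\zeta\eta(x))=0$. Now suppose for contradiction that some stable $E$ lies in $\PP V_\zeta^s\cap\PP V_\eta$. Then $E$ carries two extension structures, giving a quotient $p_\zeta\colon E\to\zeta$ with kernel the subbundle $\zeta^{-1}(-x)$, together with a subbundle inclusion $b\colon\eta^{-1}(-x)\hookrightarrow E$ coming from the $\eta$-extension. The composite $p_\zeta\circ b$ is an element of $\Hom(\eta^{-1}(-x),\zeta)\cong H^0(\zeta\eta(x))=0$, hence it vanishes; therefore $b$ factors through $\Ker p_\zeta=\zeta^{-1}(-x)$, producing a nonzero morphism $\beta\colon\eta^{-1}(-x)\to\zeta^{-1}(-x)$.

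Since any nonzero morphism of line bundles on the integral curve $X$ is injective, $\beta$ forces $\deg\eta^{-1}(-x)\leq\deg\zeta^{-1}(-x)$, that is $-1\leq-2$, which is absurd. Hence the intersection is empty, which proves (ii). I do not anticipate a genuine obstacle here: the only points that demand attention are the verification that $b$ is a saturated subbundle inclusion (so that the induced $\beta$ is nonzero) and the canonical identification $\Hom(\eta^{-1}(-x),\zeta)\cong H^0(\zeta\eta(x))$, whose vanishing encodes exactly the non-effectivity hypothesis of case (ii). Once these are in place the degree contradiction is immediate.
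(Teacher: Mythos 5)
Your part (ii) is correct. In the paper's notation your composite $p_\zeta\circ b$ is the map $b\circ c$, and your argument is the paper's opening move run in reverse: the paper starts from the other composite $d\circ a\colon \zeta^{-1}(-x)\to\eta$, shows $d\circ a\neq 0$ for degree reasons, and computes $\Coker(d\circ a)\cong \CC_{p+q}$ to conclude $\zeta\otimes\eta\cong\cO(p+q-x)$, which yields (ii) by contraposition; you instead use the hypothesis of (ii), correctly restated as $\rH^0(\zeta\otimes\eta(x))=0$, to force $p_\zeta\circ b=0$ and then obtain the degree contradiction $-1\leq -2$. Both are sound, and yours is slightly more direct for (ii) taken in isolation; the paper's orientation has the advantage that the cokernel $\CC_{p+q}$ it produces is what locates the points $p,q$ needed for part (i).

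Part (i), however, contains a genuine gap: the one-line deduction is circular. The concluding sentence of Proposition \ref{line}, namely ``$\overline{pq}\setminus\{p,q\}=\PP V_L^s\cap\PP V_{L^{-1}(p+q-x)}$'', is (after setting $L=\zeta$ and $\eta\cong\zeta^{-1}(p+q-x)$) literally the statement you are asked to prove, and the proof supplied for Proposition \ref{line} establishes only the easy half of it: it identifies the points of $\overline{pq}\setminus\{p,q\}$ with the doubly modified bundles of $\zeta\oplus\zeta^{-1}(p+q-x)$, each of which visibly carries both extension structures, so the line maps into the intersection. The reverse inclusion --- that \emph{every} stable $E$ admitting both a $\zeta$-extension and an $\eta$-extension structure arises this way, i.e.\ embeds into $\zeta\oplus\zeta^{-1}(p+q-x)$ with cokernel $\CC_{p+q}$ and with both components $v_p,v_q$ lying in $\CC^*$ --- is precisely what the paper's proof of Proposition \ref{intersec} is devoted to: it proves $\Coker(d\circ a)\cong\Coker(b\circ c)$ via \eqref{comdg1}, then uses \eqref{comdg2} to show that the pair of quotient maps realizes $E$ as in \eqref{eq6}, and only at that point invokes Proposition \ref{line} (for the part of it that was actually proven there). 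Your proposal offers no substitute for this step, so the substantive half of (i) is missing; if your proof replaced the paper's, this inclusion would be established nowhere. The natural repair continues your own computation in the complementary case: when $\rH^0(\zeta\otimes\eta(x))\neq 0$, the composite $p_\zeta\circ b$ is a nonzero section vanishing on a divisor $p+q$, and one must then carry out the diagram chase to embed $E$ into $\zeta\oplus\zeta^{-1}(p+q-x)$ with cokernel $\CC_{p+q}$ and $v_p,v_q\in\CC^*$ (and verify the converse inclusion) before Proposition \ref{line} can legitimately finish the argument.
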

\begin{proof}
Consider an element $E$ in $\PP V_{\zeta}^{\mathrm{s}}\cap\PP V_{\eta}$. Then,
\[ \xymatrix@=10pt{0 \ar[r] & \zeta^{-1}(-x) \ar[r]^(0.65)a & E \ar[r]^b \ar[d]^= & \zeta \ar[r] & 0 \\
0 \ar[r] & \eta^{-1}(-x) \ar[r]^(0.65)c & E \ar[r]^d & \eta \ar[r] & 0.}\]
If $d\circ a$=0, $d$ factors through $\zeta$. But since $\deg(\zeta)=1>\deg(\eta)=0$, $d=0$ which leads to contradiction. So, $d\circ a$ is injective. Since $\deg(\eta)=0$ and $\deg(\zeta^{-1}(-x))=-2$, $\Coker(d \circ a))= \CC_{p+q}$ for $p,q\in X$.  Therefore $\zeta^{-1}(-x)\cong\eta(-p-q)$. From the commutative diagram
\begin{equation}\label{comdg1}
\xymatrix{& &0  &0&\\
0\ar[r] &L^{-1}(-x)\ar@{=}[d] \ar[r] & \eta \ar[r]^{r} \ar[u]  &\CC_{p+q} \ar[u]\ar[r] &0\\
 0 \ar[r] & \zeta^{-1}(-x) \ar[r]^{a} & E \ar[r]^{b} \ar[u]^d & \zeta \ar[r] \ar[u]_{s} & 0\\
&  & \eta^{-1}(-x) \ar@{=}[r] \ar[u]^{c}  & \eta^{-1}(-x) \ar[u]_{b\circ c} & \\
& & 0\ar[u] & 0,\ar[u] &}
\end{equation}
we can see that $\Coker(d \circ a) \cong \Coker(b \circ c)$.
From this,
\[ \xymatrix@=15pt{0 \ar[r] & \zeta^{-1}(-x) \ar[r]^(0.65)a & E \ar[r]^b \ar[d]^= & \zeta \ar[r] & 0 \\
0 \ar[r] & \zeta(-p-q) \ar[r]^(0.65)c & E \ar[r]^(0.25)d & \zeta^{-1}(p+q-x) \ar[r] & 0.} \]
Hence we obtain a morphism $$b\oplus d :E \lr  \zeta\oplus\zeta^{-1}(p+q-x).$$

Let us consider the diagram
\begin{equation}\label{comdg2}
\xymatrix{ 0 \ar[r] & \CC_{p+q} \ar[r]^(.4){\Delta} & \CC_{p+q}\oplus \CC_{p+q} \ar[r]^(.6){h} & \CC_{p+q} \ar[r] & 0 \\
0 \ar[r] & E \ar[r]^(.3){b\oplus d} \ar[u]^{s\circ b} & \zeta\oplus\zeta^{-1}(p+q-x) \ar[u]^{s\oplus r} \ar[r] & C_3 \ar[r] \ar[u]^{g}  & 0 }
\end{equation}
Where $h$ is defined by $h(u,v):=u-v$. Since $h \circ (s\oplus r)$ is surjective, $g$ is also surjective. Since degree of  $C_3$ is 2 and supported at $\{p,q\}$, $g$ is an isomorphism.  

In summary, $E$ fits into the diagram
\begin{equation}\label{eq6}
\xymatrix@=20pt{0 \ar[r] & E \ar[r]^(0.25){b\oplus d} & \zeta\oplus\zeta^{-1}(p+q-x) \ar[r]^(0.65){(v_p \oplus v_q)} & \CC_{p+q} \ar[r] & 0,} \end{equation}
such that  $v_t \in \CC^*$ for $t\in \{p,q\}$. We remark that $v_p$ and $v_q$ lie in $\CC^*=\PP^1\setminus \{[1:0], [0:1]\}$ by the surjectivities of $b$ and $d$.
Conversely, it is easy to check that a vector bundle $E$ satisfying these conditions is contained in $\PP V_{\zeta}^s\cap \PP V_{\eta}$ when $\eta \cong \zeta^{-1}(p+q-x)$. 
Therefore, by Proposition \ref{line}, we have the conclusion.
\end{proof}

To study the stable maps in $\bP$, we need the following simple conclusion in Corollary \ref{degree}.
\begin{lemm}\label{ninj}
For $\alpha \neq \beta \in \PP V_L^s$, $\Psi_L(\alpha)=\Psi_L(\beta)$ if and only if $\alpha$ and $\beta$ lie in a trisecant line.
\end{lemm}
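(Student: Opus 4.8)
The plan is to reinterpret a point of $\PP V_L^s$ as a pair consisting of a stable bundle together with a saturated sub-line-bundle, and then to translate the coincidence $\Psi_L(\alpha)=\Psi_L(\beta)$ into the presence of two such sub-bundles inside a single bundle. Concretely, a nonzero class $\alpha\in V_L=\Ext^1(L,L^{-1}(-x))$ is, up to the scaling that defines its point in $\PP V_L$, the same datum as the bundle $E_\alpha:=\Psi_L(\alpha)$ equipped with its saturated sub-line-bundle $A\cong L^{-1}(-x)$ and quotient $E_\alpha/A\cong L$. Since a stable $E$ has only scalar automorphisms, distinct points give distinct sub-bundle structures; thus for $\alpha\neq\beta$ the equality $\Psi_L(\alpha)=\Psi_L(\beta)=:E$ means precisely that $E$ carries two \emph{distinct} saturated sub-line-bundles $A,B\cong L^{-1}(-x)$, each with quotient $L$. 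This reformulation is the backbone of both directions.

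For the forward implication I would analyze the pair $(A,B)$. Two distinct saturated sub-line-bundles of a rank-two bundle meet in rank $0$, so $A\cap B=0$ and $A\oplus B\hookrightarrow E$ with torsion cokernel; comparing degrees ($\deg A=\deg B=-2$, $\deg E=-1$) gives $\mathrm{length}(E/(A\oplus B))=3$. The composite $B\hookrightarrow E\twoheadrightarrow E/A=L$ is then injective with torsion cokernel of length $3$, i.e. a nonzero map $\iota:L^{-1}(-x)\cong B\to L$, so $0\neq\iota\in\Hom(L^{-1}(-x),L)=\rH^0(L^2(x))$ and $L^2(x)\cong\cO_X(p+q+r)$ for the effective zero divisor $p+q+r$ of $\iota$. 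By Corollary \ref{trisec} this is exactly the condition that $X\subset\PP^{g+1}$ has a trisecant $l$ meeting $X$ in $p+q+r$. It remains to place $\alpha,\beta$ on $l$: recall from the proof of Corollary \ref{trisec} that $l=\PP\ker\big(V_L\xrightarrow{\gamma}\Ext^1(L(-p-q-r),L^{-1}(-x))\big)$, where $\gamma$ is restriction along $L(-p-q-r)\hookrightarrow L$, and $\gamma([\alpha])=0$ iff the inclusion $L(-p-q-r)\hookrightarrow E_\alpha/A=L$ lifts to $E_\alpha$. But $B\cong L^{-1}(-x)\cong L(-p-q-r)$ maps into $E$ and the composite $B\to E\to L$ is exactly this inclusion (its image is $L(-p-q-r)$ by the length computation), so the lift exists and $[\alpha]\in l$; interchanging the roles of $A$ and $B$ gives $[\beta]\in l$.

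For the converse I would argue by a degree computation on the resolution $\widetilde\Psi_L:\bP\to\cN$ of Proposition \ref{mainprop}. Writing $\Pic(\bP)=\ZZ\,\pi^*H\oplus\ZZ\,E$ (with $H$ a hyperplane class and $E$ the exceptional divisor), the facts that $\deg\Psi_L=3$ (so a general line maps to a cubic, giving coefficient $3$ on $\pi^*H$) and that each exceptional $\PP^{g-1}$ is linearly embedded by $\widetilde\Psi_L$ (Proposition \ref{mainprop}(2), which forces coefficient $1$ on $E$) pin down $\widetilde\Psi_L^*\Theta=3\,\pi^*H-E$. A trisecant $l$ meets $X$ with total length $3$ and is not contained in $X$, so its strict transform $\tilde l\subset\bP$ satisfies $\tilde l\cdot\pi^*H=1$ and $\tilde l\cdot E=3$; hence $\deg\big(\widetilde\Psi_L^*\Theta|_{\tilde l}\big)=3\cdot 1-3=0$. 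Since $\Theta$ is ample on $\cN$, the curve $\tilde l$ is contracted to a point, so every point of $l$ (in particular $\alpha$ and $\beta$) has the same image and $\Psi_L(\alpha)=\Psi_L(\beta)$.

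The main obstacle is, in the forward direction, the identification of $l$ and the verification that $[\alpha],[\beta]$ land on this specific trisecant rather than merely deducing $\rH^0(L^2(x))\neq0$: one must match the coboundary description of $l$ from Corollary \ref{trisec} with the geometric lift furnished by the second sub-bundle, and keep track of the possible non-uniqueness of the trisecant when $X$ is trigonal or hyperelliptic. The degree bookkeeping in the converse is then routine once $\widetilde\Psi_L^*\Theta$ is determined; the coefficient of $E$ can be independently cross-checked against the bisecant case, where Proposition \ref{line} forces the image of $\overline{pq}$ to be a line (degree $3-2=1$), consistent with $\widetilde\Psi_L^*\Theta=3\,\pi^*H-E$.
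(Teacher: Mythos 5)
Your reformulation of the fiber of $\Psi_L$ as the set of saturated sub-line-bundles $A\cong L^{-1}(-x)\subset E$ is correct (stability kills all non-scalar automorphisms), and both halves of your argument can be made to work, but they sit differently relative to the paper. The forward direction is essentially the paper's proof: the paper likewise forms the composite of the inclusion from one extension with the quotient map from the other ($d\circ a$ there, your $\iota$), deduces $L^2(x)\cong\cO_X(p+q+r)$, and then places $\alpha,\beta$ on the line; the only difference is mechanics --- you use the lifting criterion for the vanishing of the restricted class $\gamma(\alpha)$, while the paper chases a snake-lemma diagram through $b\oplus d:E\to L\oplus L$ to land both classes in $\ker j_2$. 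The converse is where you genuinely depart: the paper shows the trisecant contracts by an explicit computation (its stable points map into the linearly embedded $\PP V_{L^{-1}(p+q-x)}$ with image $\PP\ker j_3$, and $\ker j_3\cong\CC^2/\rH^0(L^2(x)(-p-q))$ is one-dimensional), whereas you compute $\widetilde{\Psi}_L^*\Theta=3\pi^*H-E$ --- correctly: the coefficient $3$ comes from $\deg\Psi_L=3$, and the exceptional coefficient is pinned down by Proposition \ref{mainprop}(2) because $\cO(E)$ has degree $-1$ on lines in the exceptional fibers --- and invoke ampleness of $\Theta$. Neither input depends on Lemma \ref{ninj}, so there is no circularity; your route is shorter and yields Corollary \ref{degree} for free, while the paper's route additionally identifies the point to which the trisecant contracts, which it reuses later (e.g.\ in the $n=2$ case of Corollary \ref{degree}).

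The one genuine loose end is the one you flagged but did not close: ``interchanging the roles of $A$ and $B$'' places $[\beta]$ on the trisecant through the zero divisor $D'$ of the composite $A\hookrightarrow E\twoheadrightarrow E/B$, and you need $D'=p+q+r$ on the nose, not merely $D'\sim p+q+r$. When $X$ is trigonal or hyperelliptic, $|L^2(x)|$ may contain several effective divisors, hence several distinct trisecant lines (uniqueness in Corollary \ref{trisec}(b) fails), so linear equivalence is not enough and your argument as written only puts $[\beta]$ on \emph{some} trisecant. The fix is one sentence: since $A\cap B=0$, the cokernel of $B\to E/A$ and the cokernel of $A\to E/B$ are both canonically isomorphic to the length-$3$ torsion sheaf $E/(A\oplus B)$, so $D$ and $D'$ are both equal to the divisor associated to that sheaf and hence coincide. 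With that observation inserted, your proof is complete for every $X$ of genus $g\geq 3$.
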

\begin{proof}
Let $\Psi_L(\alpha)=\Psi_L(\beta)$, then
\[
\xymatrix{ \alpha : [ \ 0 \ar[r] & L^{-1}(-x) \ar[r]^(0.6)a & E \ar@{=}[d] \ar[r]^b & L \ar[r] & 0 \ ]
\\ \beta : [ \ 0 \ar[r] & L^{-1}(-x)\ar[r]^(0.6)c & E \ar[r]^d & L \ar[r] & 0 \ ].}
\]
If $d \circ a=0$, then $a$ factors through $L^{-1}(-x)$, and $a$ is clearly an isomorphism on $L^{-1}(-x)$. Also $d$ descends on $L$ and thus $d$ is an isomorphism on $L$. So we conclude that $\alpha=\beta$ which leads to a contradiction. Thus $d \circ a\neq 0$.  Hence $d \circ a$ is injective. This implies that $\Coker(d \circ a)= \CC_{p+q+r}$ for some $p,q,r\in X$. Then $L^{-1}(-x)\cong L(-p-q-r)$. That is, $L^2(x)\cong \cO_X(p+q+r)$. In a similar way, $b \circ c \neq 0$.
By Corollary \ref{trisec}, there is a trisecant line $l$ in $\PP^{g+1}$ such that $l\cap X=p+q+r$.
Consider a map $b\oplus d : E \to L\oplus L$. By the similar argument in equation \eqref{comdg2}, we have $\Coker(b\oplus d) = \CC_{p+q+r}$. Consider a following commutative diagram :
\[
\xymatrix{ 0 \ar[r] & L^{-1}(-x) \ar@{_{(}->}[d]^{a} \ar[r]^{d \circ a} & L \ar@{_{(}->}[d]^{0\oplus id} \ar[r] & \CC_{p+q+r} \ar@{=}[d] \ar[r] & 0
\\ 0 \ar[r] & E \ar[r]^{b \oplus d} & L\oplus L \ar[r] & \CC_{p+q+r} \ar[r] & 0 }
\]
By the snake lemma, we have a following diagram
\[
\xymatrix{ 0 \ar[r] & L^{-1}(-x) \ar@{_{(}->}[d]^{d\circ a} \ar[r]^{a} & E \ar@{_{(}->}[d]^{b\oplus d} \ar[r]^{b} & L \ar@{=}[d] \ar[r] & 0
\\ 0 \ar[r] & L \ar[r] & L\oplus L \ar[r] & L \ar[r] & 0. }
\]
Hence $\alpha$ and $\beta$ are contained in the projectivization of the kernel of the map $j_2$ in \eqref{seq2} because $L\cong L^{-1}(p+q+r-x)$. Hence by Corollary \ref{trisec}, $\alpha$ and $\beta$ lies in the trisecant line $l$. 

To prove the necessary condition, it is enough to show that the trisecant line contracts to a point in $\cN$. Let $l$ be an arbitrary trisecant line such that $l\cap X=p+q+r$. In this case, $l=\overline{pq}$.
Consider the following exact sequence.
\begin{align}\label{seq1}
0\lr \Ext^0(L^{-1}(p+q-x),L) \to \Ext^0(L^{-1}(p+q-x),L|_{p+q})\lr&\\
\Ext^1(L^{-1}(p+q-x),L(-p-q))\stackrel{j_3}{\longrightarrow} \Ext^1(L^{-1}(p+q-x),L) &\to  \Ext^1(L^{-1}(p+q-x),L|_{p+q})=0\nonumber,
\end{align}
be the long exact sequence from the short exact sequence $\ses{L(-p-q)}{L}{L|_{p+q}}$ by taking the functor $ \Hom(L^{-1}(p+q-x),-)$.
Let us regard elements of $\ker j_3$ as $\PP^{g-1}=\PP V_{L^{-1}(p+q-x)}$ represented by
\[ \ses{L(-p-q)}{E}{L^{-1}(p+q-x)}. \] 
By the same method as the proof of Proposition \ref{line}, $E$ fits into an exact sequence:
\begin{equation}\label{eq111}
\xymatrix@=20pt{0 \ar[r] & E \ar[r]^(0.2){b\oplus d} & L\oplus L^{-1}(p+q-x) \ar[r]^(0.65){v_p \oplus v_q} & \CC_{p+q} \ar[r] & 0}
\end{equation}
such that $d$ is surjective. So by Proposition \ref{line} it corresponds to a point $\overline{pq}\setminus{\{p,q\}}$. Conversely, by some diagram chasing, one can easily check that a vector bundle $E$ fits into the diagram \eqref{eq111} such that $d$ is surjective corresponds to an element of $\ker j_3$.
So $\overline{pq}\setminus{\{p,q\}}$ contracts to a point if and only if the dimension of $\ker j_3$ is $\leq1$.
By \eqref{seq1}, $$\ker j_3\cong \Ext^0(L^{-1}(p+q-x),L|_{p+q}) /\Ext^0(L^{-1}(p+q-x),L)\cong\CC^2/\rH^0(L^2(x)(-p-q)).$$
But $L^2(x)(-p-q)\cong \cO_X(r)$ (Corollary \ref{trisec}) and thus the claim holds.
\end{proof}

\begin{coro}\label{degree}
If the line $l$ meets with $X$ $n$ times (possibly with multiplicity), the map $i : l \setminus (l \cap X )\lr \cN$ is of degree $3-n$ for $n=0,1,2,3$.
\end{coro}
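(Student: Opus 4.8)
The plan is to compute the degree of $i$ as an intersection number on the blow-up $\bP=\mathrm{bl}_X\PP^{g+1}$, using the explicit resolution $\widetilde{\Psi}_L:\bP\to\cN$ of Proposition \ref{mainprop}. Write $H=\pi^*\cO(1)$ for the pulled-back hyperplane class of $\PP^{g+1}$ and $E$ for the exceptional divisor, so that $\mathrm{Pic}(\bP)=\ZZ H\oplus\ZZ E$. Since $i$ on $l\setminus(l\cap X)$ is just the restriction of $\widetilde{\Psi}_L\circ\pi^{-1}$, its degree is the $\Theta$-degree of the one-cycle $(\widetilde{\Psi}_L)_*[\tilde l]$, where $\tilde l\subset\bP$ is the proper transform of $l$; by the projection formula this equals $\tilde l\cdot\widetilde{\Psi}_L^*\Theta$. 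So the first and main step is to identify the class $\widetilde{\Psi}_L^*\Theta=dH+bE$.

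I claim $\widetilde{\Psi}_L^*\Theta=3H-E$. To get the coefficient of $H$, take a line $l_0$ disjoint from $X$: its proper transform satisfies $\tilde l_0\cdot H=1$ and $\tilde l_0\cdot E=0$, and since $\deg\Psi_L=3$ (the map is given by a sub-system of cubics, as recalled in the introduction) the image $\widetilde{\Psi}_L(\tilde l_0)$ has degree $3$, forcing $d=3$. To get the coefficient of $E$, I would use part (2) of Proposition \ref{mainprop}: each fiber $F\cong\PP^{g-1}$ of $E\to X$ is linearly embedded into $\cN$ by $\widetilde{\Psi}_L$, so a line $\ell\subset F$ maps to a line in $\cN$, whence $\ell\cdot\widetilde{\Psi}_L^*\Theta=1$. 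As $\ell\cdot H=0$ and $\ell\cdot E=\deg\big(\cO_E(E)|_\ell\big)=-1$ (because $\cO_E(E)$ restricts to $\cO_F(-1)$ on each fiber), writing $\widetilde{\Psi}_L^*\Theta=3H+bE$ gives $1=-b$, i.e. $b=-1$.

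For the second step, let $l$ meet $X$ at $n$ points counted with multiplicity and let $\tilde l$ be its proper transform. Since $\pi|_{\tilde l}:\tilde l\to l$ is birational and $l$ is a line, $\tilde l\cdot H=1$. On the other hand $\tilde l\cdot E$ equals the length of the scheme-theoretic intersection $l\cap X$, namely $n$; this is a standard local computation on the blow-up, and it correctly records multiplicities (for instance a line simply tangent to $X$ at one point contributes $2$, which one checks directly in the two standard charts of the blow-up). Combining the two steps,
\[
\deg i=\tilde l\cdot\widetilde{\Psi}_L^*\Theta=\tilde l\cdot(3H-E)=3\cdot 1-1\cdot n=3-n.
\]

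The hard part will be pinning down the coefficient $b=-1$ of $E$: this is precisely the statement that the linear system defining $\Psi_L$ vanishes along $X$ to order exactly one, and it is what makes the degree drop by $n$ rather than by a multiple of $n$. I would derive it cleanly from the linear-embedding property in Proposition \ref{mainprop}(2) as above, rather than from a direct analysis of the base scheme. As consistency checks: $n=0$ returns the generic value $3$, while $n=3$ gives degree $0$, matching the fact established in Lemma \ref{ninj} and its proof that a trisecant line is contracted to a single point of $\cN$.
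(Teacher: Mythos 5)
Your argument is correct, but it takes a genuinely different route from the paper's. The paper proves the corollary by a case analysis in $n$: the case $n=0$ is immediate from $\deg\Psi_L=3$; the case $n=3$ follows from Lemma \ref{ninj} (trisecant lines are contracted); for $n=2$ the proof of Lemma \ref{ninj} identifies the image of $\overline{pq}$ with a line in the degree-zero extension space $\PP V_{L^{-1}(p+q-x)}\cong\PP^{g-1}$, which is linearly embedded in $\cN$; and for $n=1$ it argues by elimination, excluding $\deg(i)=0$ via Lemma \ref{ninj} and excluding $\deg(i)=1$ via the classification of lines in $\cN$ from \cite{CH06} combined with Proposition \ref{intersec}. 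You instead compute once and for all the divisor class $\widetilde{\Psi}_L^*\Theta=3H-E$ on $\bP$, pinning down the $H$-coefficient with a line disjoint from $X$ (using the same input $\deg\Psi_L=3$ that the paper uses for its $n=0$ case) and the $E$-coefficient with a line in a fiber of $E\to X$ (using the linear-embedding statement in part (2) of Proposition \ref{mainprop}, which holds without the non-trisecant hypothesis); then all four cases follow simultaneously from $\tilde l\cdot H=1$ and $\tilde l\cdot E=n$. Your route is shorter and uniform: it handles tangency multiplicities automatically (in agreement with the class $(1,-n)$ of the proper transform recorded in Lemma \ref{clfy}), it avoids the appeal to \cite{CH06} and to Proposition \ref{intersec} entirely, it re-proves as a by-product that trisecant lines are contracted, and it supplies a justification for the bound $\deg(i)\in\{0,1,2\}$ that the paper's $n=1$ case asserts without comment. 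What the paper's case-by-case route buys is finer geometric information about the image curves themselves --- for $n=1$ that the closure of the image is a (Hecke) conic, for $n=2$ that it is a line inside an exceptional $\PP^{g-1}$ --- and it is this description, not just the numerical degree, that the ensuing Remark and the component analysis of Section 3 actually use.
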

\begin{proof} 
$n=0$: This case is clear since $\deg\Psi_L=3$

$n=1$:  The degree of $\mathrm{deg}(i)\in \{0,1,2\}$. If $\mathrm{deg}(i)=1$, then it contracts to a point in $\cN$. Therefore by the Lemma \ref{ninj}, $l$ is a trisecant line of $X$ which leads to a contraction. If $\mathrm{deg}(i)=1$, by \cite{CH06}, the map $i$ factors through $\PP^{g-1}=\PP V_M$ for some $M\in \Pic^0(X)$. Thus by Corollary \ref{intersec}, we conclude that $l$ meets $X$ twice, which leads to the contraction. Therefore $\mathrm{deg}(i)=2$.

$n=2$: Let $l\cap X=p+q$ then $l=\overline{pq}$. Since $l$ is not trisecant, $\rH^0(L^2(x)(-p-q))=0$ by Proposition \ref{trisec}. Thus by the proof of Lemma \ref{ninj}, $l$ corresponds to a line in $\PP^{g-1}=\PP V_{L^{-1}(p+q-x)}$. Hence the map $\mathrm{deg}(i)=1$ since $\Psi_{L^{-1}(p+q-x)}$ is a linear embedding.

$n=3$:  Let $l\cap X=p+q+r$. By Lemma \ref{ninj}, $l$ contracts to a point in $\cN$. Therefore $\mathrm{deg}(i)=0$.

\end{proof}
\begin{rema}
Consider the $n=1$ case in Corollary \ref{degree}. Since $\mathrm{deg}(i)=0$, the closure $\overline{l}:=\overline{i(l \setminus (l \cap X ))}$ in $\cN$ is a smooth conic. By \cite[Proposition 3.6]{Kie07}, $\overline{l}$ should be a \emph{Hecke} curve or a conic in $\PP V_M\cong \PP^{g-1}$ for some $M\in \Pic^0(X)$. In the latter case, $l\cap X=r$, $l\setminus {r} \subset \PP V_L^s \cap \PP V_M$ for some $M\in \Pic^0(X)$. 
This contradicts part i) of Proposition \ref{intersec}
Hence $\overline{l}$ is a Hecke curve in $\cN$.
\end{rema}
From now on, let us fix a non-trisecant line bundle $L$ on $X$. So by Corollary $2.7$, n=3 case does not occur.
\section{Stable maps in the moduli space $\cN$}
In \S 1.1, we reviewed the classification of degree 3 stable maps $\PP^1 \to \cN$ studied in  \cite{Cas04, Kie07}. In this section, we study the closure of the component parametrizing the stable maps of type ii) in \S 1.1.
\subsection{Stable maps in the blown-up space}
Let $\bP=\text{bl}_X\PP^{g+1}$ be the blown-up space for the non-trisecant line bundle $L$. Let $\beta=\pi^*[\text{line}]$ be the curve class in $\rH_2(\bP)$. In this subsection, we will study the moduli space
\[
\bM_0(\bP,\beta)(\subset \bM_0(\cN,3))
\]
of stable maps of genus $0$ with embedded degree $3$. Let us start with the topological classification of the stable maps in $\bP$.
\begin{lemm}\label{clfy}
Stable maps parametrized by the closed points in $\bM_0(\bP,\beta)$ are one of the following types.
\begin{enumerate}
\item Lines in $\PP^{g+1}\setminus X$.
\item Gluing of the proper transform of a line in $\PP^{g+1}$ which meets $X$ at a point $p$ and a line in $\pi^{-1}(p) \cong \PP^{g-1}$.
\item Gluing of the proper transform of a line in $\PP^{g+1}$ which meets $X$ at two different points $p,q$, a line in $\pi^{-1}(p) \cong \PP^{g-1}$, and a line in $\pi^{-1}(q) \cong \PP^{g-1}$
\item Gluing of the proper transform of a line in $\PP^{g+1}$ which meets $X$ at two different points $p,q$ and a degree two stable map in $\pi^{-1}(p)\cong \PP^{g-1}$.
\item Gluing of the proper transform of a line in $\PP^{g+1}$ which is tangent to $X$ at $p$ and a degree two stable map in $\pi^{-1}(p)\cong \PP^{g-1}$.
\end{enumerate}
Furthermore, each dimension of the loci of theses types (1)-(5) is $2g$, $2g-1$, $2g-2$, $2g$ and $2g-1$ respectively.
\end{lemm}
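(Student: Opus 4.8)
The plan is to study an arbitrary stable map $[f:C\to\bP]$ with $f_*[C]=\beta$ by composing with the blow-down $\pi$ and bookkeeping classes in $\rH_2(\bP)$. Let $\cE$ be the exceptional divisor and let $e\in\rH_2(\bP)$ be the class of a line inside a fiber $\pi^{-1}(p)\cong\PP^{g-1}$, so that $\rH_2(\bP)$ is spanned by $\beta$ and $e$. I would first record the four intersection numbers: $\beta$ is represented by the proper transform of a general line disjoint from $X$, so $\beta\cdot\cE=0$ and $\beta\cdot\pi^*\cO(1)=1$, while $e\cdot\pi^*\cO(1)=0$ and $e\cdot\cE=-1$ (the last because $\cO(\cE)|_{\cE}$ is the tautological bundle on the projectivized normal bundle). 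Since $\pi_*\beta=[\mathrm{line}]$, the composite $\pi\circ f$ is a genus-$0$ degree-$1$ stable map to $\PP^{g+1}$; hence its image is a single line $l$, exactly one component $C_0\subset C$ dominates $l$ (necessarily isomorphically, the degree being $1$), and every other component is contracted by $\pi$. Consequently $f(C_0)=\widetilde{l}$, the proper transform, and the remaining components are vertical, i.e.\ mapped into fibers of $\cE$.

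Next I would localize the vertical components and compute their total degree. A component contracted by $\pi$ lands in $\cE=\pi^{-1}(X)$ and, being joined to $C_0$ through the tree $C$, maps into a fiber $\pi^{-1}(p)$ with $p\in l\cap X$. Writing $k=\deg(l\cap X)$ for the intersection multiplicity, the intersection numbers above force $[\widetilde l]=\beta-k\,e$ (indeed $\widetilde l\cdot\cE=k$), so the vertical part carries the complementary class $\beta-[\widetilde l]=k\,e$, i.e.\ total degree $k$ spread over the points of $l\cap X$. This is the structural core: a class-$\beta$ stable map is the proper transform of one line $l$ together with vertical curves of total degree $k=\deg(l\cap X)$ sitting over $l\cap X$, each vertical curve attached to $\widetilde l$ at the point $\widetilde l\cap\pi^{-1}(p)$.

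The enumeration then uses the non-trisecant hypothesis. By Corollary~\ref{trisec}, a line meeting $X$ in three or more points (with multiplicity) forces $\rH^0(L^2(x))\neq0$, which is excluded; hence $k\le2$. The cases $k=0$ and $k=1$ give types (1) and (2). For $k=2$ I would distinguish whether $l\cap X$ consists of two distinct points $p\neq q$ or of a single tangency point: in the first case the vertical degree $2$ is distributed as $(1,1)$, producing type (3), or as $(2,0)$, producing type (4); in the second case all the degree sits over $p$, producing type (5). I would emphasize that a ``degree two stable map in $\pi^{-1}(p)$'' is taken together with its attaching marked point, so it includes smooth conics, pairs of lines, and even configurations with a contracted central component; this is what guarantees that the five types exhaust all combinatorial possibilities, since three branches can never meet at a single node of a nodal domain.

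Finally I would count parameters stratum by stratum, using that the attaching node $\widetilde l\cap\pi^{-1}(p)$ is determined by $l$, so each vertical curve is forced through a fixed point of $\pi^{-1}(p)\cong\PP^{g-1}$. Type (1) is an open subset of $Gr(2,g+2)$, of dimension $2g$. Type (2) gives $1+g+(g-2)=2g-1$ (a point $p\in X$, a line through $p$ meeting $X$ only there, and a fiber line through the fixed node). Type (3) gives $2+2(g-2)=2g-2$ (the secant $\overline{pq}$ and two fiber lines through the two fixed nodes). For types (4) and (5) the vertical part is a degree-two stable map through a fixed point, and since $\dim\bM_{0,1}(\PP^{g-1},2)=3g-3$ while the point condition has codimension $g-1$, this vertical locus has dimension $2g-2$; hence (4) gives $2+(2g-2)=2g$ and (5) gives $1+(2g-2)=2g-1$. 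The step I expect to be most delicate is the tangent case (5): identifying the point where the proper transform of the embedded tangent line meets the exceptional fiber requires the second-order behaviour of the blow-up along $X$, and I would handle it as the limit $q\to p$ of the secant configuration for (4), which simultaneously explains why (5) lies in the closure of (4).
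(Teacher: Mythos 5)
Your proposal is correct and follows essentially the same route as the paper: both classify stable maps via the decomposition $\rH_2(\bP)\cong\ZZ\beta\oplus\ZZ e$ (so the proper transform of a line meeting $X$ at $k$ points has class $\beta-ke$, with vertical degree $k$ distributed over the fibers), bound $k\le 2$ by the non-trisecant condition (Corollary \ref{trisec}), and then count dimensions stratum by stratum, with the key count being that degree-two stable maps in $\PP^{g-1}$ through a fixed point form a $(2g-2)$-dimensional family. Your write-up simply fills in the details the paper leaves as ``straightforward'' (the structure of $\pi\circ f$ as a degree-one stable map, the role of ghost components, and the explicit counts for types (1)--(3) and (5)), and all of your numbers agree with the paper's.
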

\begin{proof}
Since $\rH_2(\bP)\cong \ZZ\oplus\ZZ$ where $(1,0)$ is a homology class of $\pi^*[\mathrm{line}]$ and $(0,1)$ is a homology class of a line in $\pi^{-1}(p)$, the homology class of the proper transform of a line $l\subset \PP^{g+1}$ which meets $X$ at $n$ point is $(1,-n)$. 
Because of the non-trisecant condition on the curve $X$ (Corollary \ref{trisec}), one can obtain the classifications of stable maps in $\bP$. The dimension computation is straightforward.
For example, let us compute the dimension of the locus consisting of type (4).  The locus of stable maps consisting of type (4) has a fibration over $X\times X\setminus \Delta$ with the fiber $Z$. Here $Z$ parameterizes the stable maps of degree two in $\PP^{g-1}$ passing through a fixed point. But one can easily check that $Z$ is irreducible by \cite{KP01} and \cite[Chapter III, Corollary 9.6]{Har77}.
Therefore the locus of stable maps consisting of the type (4) is of dimension $\mathrm{dim}Z+ 2=(2g-2)+2=2g$.
\end{proof}

Through the proof  of \cite[Corollary 4.6]{KL13}, one can regard the space $\bM_0(\bP,\beta)$ locally as the zero locus of the section of a vector bundle on a smooth space and thus all irreducible components of $\bM_0(\bP,\beta)$ have dimension at least $\int_{\beta=\pi^*[\mathrm{line}]}c_1(T_{\bP})+\mathrm{dim}\bP-3=2g$.
\begin{theo}\label{comp}
The moduli space $\bM_0(\bP,\beta)$
consists of two irreducible components $\overline{\Gamma}_1$ and $\overline{\Gamma}_2$, where:
\begin{enumerate}
\item $\Gamma_1$ parameterizes lines in $\PP^{g+1}\setminus X$.
\item $\Gamma_2$ parameterizes the union of a smooth conic and a line $l$ meeting at a point where $\pi(l)$ is a line that meets $X$ at two points (allowing $l$ to be tangential to $X$).
\end{enumerate}
Moreover, the intersection $\overline{\Gamma}_1\cap \overline{\Gamma}_2$ consists of stable maps of type (5) of Lemma \ref{clfy}.
\end{theo}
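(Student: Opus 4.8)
The plan is to combine the dimension estimate stated just before the theorem with the topological classification of Lemma~\ref{clfy}. Following the remark after that lemma (based on \cite[Corollary 4.6]{KL13}), every irreducible component of $\bM_0(\bP,\beta)$ has dimension at least $2g$, while Lemma~\ref{clfy} records that the five topological types have dimensions $2g$, $2g-1$, $2g-2$, $2g$, $2g-1$. Since the loci of types (2), (3) and (5) have dimension strictly below $2g$, none of them can be dense in a component; hence each component is the closure of the type (1) locus $\Gamma_1$ or of the type (4) locus $\Gamma_2$. The proof thus reduces to showing that $\Gamma_1$ and $\Gamma_2$ are each irreducible of dimension $2g$, that $\overline{\Gamma}_1\neq\overline{\Gamma}_2$, and finally to identifying the intersection.

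For irreducibility I would argue directly. The locus $\Gamma_1$ is the open subset of the Grassmannian $Gr(2,g+2)$ consisting of lines disjoint from $X$; as $Gr(2,g+2)$ is irreducible of dimension $2g$ and the lines meeting $X$ form a proper closed subset, $\Gamma_1$ is irreducible of dimension $2g$. For $\Gamma_2$ I would reuse the fibration from the proof of Lemma~\ref{clfy}: recording the two points $p,q$ at which $\pi(l)$ meets $X$ (the secant $\overline{pq}$ being determined by $p,q$) gives a morphism $\Gamma_2\to X\times X\setminus\Delta$ whose fibre is the space $Z$ of degree two stable maps in $\pi^{-1}(p)\cong\PP^{g-1}$ passing through the fixed point $\tilde l\cap\pi^{-1}(p)$. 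Since $Z$ is irreducible of dimension $2g-2$ (by \cite{KP01} and \cite[Chapter III, Corollary 9.6]{Har77}) over the irreducible base of dimension $2$, the total space $\Gamma_2$ is irreducible of dimension $2g$. The two closures are distinct because every map in $\overline{\Gamma}_2$ has a reducible domain---the locus of domains carrying at least one node being closed---whereas every member of $\Gamma_1$ has smooth domain $\PP^1$; thus $\Gamma_1\not\subset\overline{\Gamma}_2$, and since $\Gamma_1$ is dense in $\overline{\Gamma}_1$ we conclude $\overline{\Gamma}_1\neq\overline{\Gamma}_2$. This produces exactly the two components.

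To compute $\overline{\Gamma}_1\cap\overline{\Gamma}_2$ I would first exhibit the type (5) locus inside both closures. Specializing a general line of $\PP^{g+1}\setminus X$ to the tangent line of $X$ at a point $p$ realizes a type (5) map as a limit of type (1), so the type (5) locus lies in $\overline{\Gamma}_1$; letting the two secant points of a type (4) configuration collide ($q\to p$) realizes the same maps as limits of type (4), so the type (5) locus lies in $\overline{\Gamma}_2$. For the reverse inclusion I would use that every map of $\overline{\Gamma}_2$ has its degree two exceptional part contained in a \emph{single} fibre $\pi^{-1}(p_0)$ (a conic in one fibre stays in one fibre under specialization), while the boundary $\overline{\Gamma}_1\setminus\Gamma_1$ consists of types (2), (3) and (5); among these, only type (5) has a degree two exceptional part concentrated in one fibre, since type (2) contributes only degree one and type (3) distributes its two lines between the two distinct fibres $\pi^{-1}(p)$ and $\pi^{-1}(q)$. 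Hence $\overline{\Gamma}_1\cap\overline{\Gamma}_2$ is the closure of the type (5) locus, which fibres over $X$ with irreducible fibre $Z$ and is therefore irreducible of dimension $2g-1$, as claimed.

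The hard part will be the precise control of the limiting stable maps underlying the two degenerations. One must verify, by stable reduction, that the flat limit of a family of lines acquiring a simple tangency with $X$ at $p$ is exactly a type (5) map whose exceptional curve is a degree two stable map in the single fibre $\pi^{-1}(p)$---with the class $\beta$ distributing as $(1,-2)+(0,2)$ and with no extraneous bubbling---and, dually, that no family inside $\Gamma_2$ can cause the conic to split across two distinct fibres in the limit. This local deformation-theoretic computation, involving the tangent and obstruction spaces of the maps and the smoothing of the node joining the line to the conic, is exactly the analysis promised in \S1.2 for describing the points of $\overline{\Gamma}_1\cap\overline{\Gamma}_2$, and it is where the substantive work lies.
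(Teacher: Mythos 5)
Your treatment of the component structure follows the paper essentially verbatim: the lower bound $2g$ on component dimensions from \cite{KL13}, the classification and dimension count of Lemma \ref{clfy} (which leaves only the type (1) and type (4) loci as candidates for dense subsets of components), and the irreducibility of $\Gamma_1$ and $\Gamma_2$ are exactly the paper's steps; your extra remark that every map in $\overline{\Gamma}_2$ has reducible domain, so that $\overline{\Gamma}_1\neq\overline{\Gamma}_2$, is correct. The gap is in your computation of $\overline{\Gamma}_1\cap\overline{\Gamma}_2$, where you replace the paper's argument by a direct limit analysis and, in doing so, assert without proof precisely the two statements that carry all of the difficulty. First, you claim that $\overline{\Gamma}_1\setminus\Gamma_1$ consists only of types (2), (3) and (5), i.e.\ that no type (4) map is a limit of lines disjoint from $X$. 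This is not obvious: a priori a family of lines degenerating to a secant through $p\neq q$ could concentrate its entire exceptional degree $2$ over $p$ and produce a type (4) limit. It can be ruled out --- for instance, for a one-parameter family with total map $F:\mathcal{C}\to\bP$ the divisor $F^*E$ is purely one-dimensional and lies in the central fibre, so it must contain a bubble through the point of the main component over $p$ and another through the point over $q$, forcing exceptional degree one over each --- but some such argument has to be supplied, and you give none. The paper instead disposes of type (4) by deformation theory: Proposition \ref{obst} shows that a type (4) map with smooth conic is unobstructed, hence a smooth point of the moduli space lying on the single component $\overline{\Gamma}_2$, while the type (4) maps with singular conic (dimension $2g-2$) are excluded because Hartshorne's connectedness theorem \cite{Har62}, applicable since the moduli space is a local complete intersection by \cite{KL13}, makes the intersection pure of dimension $2g-1$.

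Second, for the inclusion of the type (5) locus \emph{into} $\overline{\Gamma}_1\cap\overline{\Gamma}_2$, your degeneration of lines to a tangent line only produces, for each family, \emph{one} limiting type (5) map; nothing in your sketch shows that every degree two stable map in $\pi^{-1}(p)$ through the attachment point occurs as the bubble of such a family, and upgrading ``some'' to ``all'' is a genuine second-order computation (the bubble depends on the $2$-jet of the approaching family of lines), which you explicitly defer as ``the hard part.'' The paper never performs this computation: once the intersection is known to be nonempty (via the degeneration of type (3) maps as $q\to p$, whose limit is a type (5) map with degenerate conic), pure of dimension $2g-1$ by the connectedness theorem, and contained in the union of the irreducible $(2g-1)$-dimensional type (5) closure and the $(2g-2)$-dimensional singular-conic type (4) locus, it must coincide with the type (5) closure. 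So your plan is not wrong in outline, but the two unproven assertions are exactly where the content of the theorem lies, and the paper's route through Proposition \ref{obst} and Hartshorne connectedness is what fills them; as it stands your proposal does not constitute a proof.
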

As we will see in the proof of this theorem, the loci of stable maps of types (1)-(3) and (5) (resp.  (4) and (5) ) are contained in $\overline{\Gamma}_1$ (resp. $\overline{\Gamma}_2$).
For the proof of Theorem \ref{comp}, let us start with the computation of the obstruction space of the stable maps of type (4).
\begin{lemm}\label{norline}
Let $l\subset \bP$ be the line such that $\pi(l)\cap X=\{p,q\}$, $p \neq q$. Then the normal bundle of the line $l$ is given by
\[
N_{l/\bP}\cong \cO_l(-1)^{\oplus (g-2)}\oplus \cO_l(-1)\oplus \cO_l(1) \mbox{ or } \cO_l(-1)^{\oplus (g-2)}\oplus \cO_l^{\oplus 2}.
\]
\end{lemm}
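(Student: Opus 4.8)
The plan is to realize $N_{l/\bP}$ as a rank $g$ bundle on $l\cong\PP^1$ that differs from the normal bundle of the line $l_0:=\pi(l)\subset\PP^{g+1}$ by an elementary modification over $p$ and $q$, and then to force its splitting type by three numerical constraints. Since $l_0$ is a line meeting the smooth curve $X$ transversally at the two \emph{distinct} points $p,q$, its strict transform $l$ is isomorphic to $l_0\cong\PP^1$, the restriction $\pi|_l:l\to l_0$ is an isomorphism, and $l$ meets the exceptional divisor $E$ transversally at the two points $\tilde p,\tilde q$ over $p,q$. By Grothendieck's theorem $N_{l/\bP}\cong\bigoplus_{i=1}^{g}\cO_l(a_i)$, so the whole task is to determine the integers $a_i$.

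First I would produce the comparison map. The differential $d\pi:T_{\bP}\to\pi^*T_{\PP^{g+1}}$ is injective with cokernel supported on $E$; restricting to $l$ and passing to normal bundles (using $\pi(l)=l_0$ and the isomorphism $\pi|_l$) yields a sheaf map
\[
\phi:N_{l/\bP}\lr N_{l_0/\PP^{g+1}}\cong\cO_l(1)^{\oplus g},
\]
which is an isomorphism away from $\{\tilde p,\tilde q\}$. The main computation is local: choosing coordinates $(t,x_1,\dots,x_g)$ near $p$ with $X=\{x_1=\cdots=x_g=0\}$ and $l_0$ the $x_1$-axis (transversality meaning the tangent directions $\partial_t$ of $X$ and $\partial_{x_1}$ of $l_0$ are independent), the blow-up chart $x_i=x_1u_i$ shows that in a suitable frame $\phi=\mathrm{diag}(1,x_1,\dots,x_1)$: it is an isomorphism in the direction $T_pX$ and vanishes to order exactly one in the remaining $g-1$ normal directions. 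Hence $\phi$ is injective with torsion cokernel $\cT$, a skyscraper of length $g-1$ at each of $\tilde p,\tilde q$, giving
\[
0\lr N_{l/\bP}\stackrel{\phi}{\lr}\cO_l(1)^{\oplus g}\lr\cT\lr 0,\qquad \mathrm{length}(\cT)=2(g-1).
\]
In particular $\deg N_{l/\bP}=g-2(g-1)=2-g$ (equivalently by adjunction, since $K_{\bP}=\pi^*K_{\PP^{g+1}}\otimes\cO((g-1)E)$ and $E\cdot l=2$). I expect this local identification of $\phi$ and its cokernel to be the principal obstacle, as it requires a careful reading of the blow-up in charts; everything else is formal.

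With the sequence in hand the splitting type is essentially forced. Twisting by $\cO_l(-2)$ gives an injection $N_{l/\bP}(-2)\hookrightarrow\cO_l(-1)^{\oplus g}$, and since $H^0(\cO_l(-1)^{\oplus g})=0$ we get $H^0(N_{l/\bP}(-2))=0$, i.e. $a_i\le 1$ for all $i$. For the lower bound I would show $H^1(N_{l/\bP})=0$ via the long exact sequence
\[
H^0(\cO_l(1)^{\oplus g})\lr H^0(\cT)\lr H^1(N_{l/\bP})\lr H^1(\cO_l(1)^{\oplus g})=0,
\]
by proving the first map is surjective. Indeed, because the modification has order one, $\cT$ is a quotient of the two fibers $\cO_l(1)^{\oplus g}|_{\tilde p}\oplus\cO_l(1)^{\oplus g}|_{\tilde q}$; evaluation $H^0(\cO_l(1)^{\oplus g})\to\cO_l(1)^{\oplus g}|_{\tilde p}\oplus\cO_l(1)^{\oplus g}|_{\tilde q}$ is an isomorphism since $\cO_l(1)$ separates the two distinct points, and the fiberwise quotient onto $\cT$ is surjective. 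Hence $H^1(N_{l/\bP})=0$, which forces $a_i\ge -1$ for all $i$.

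Finally I would combine $a_i\in\{-1,0,1\}$ with $\sum_i a_i=2-g$. Writing $n_j=\#\{i:a_i=j\}$, we have $n_{-1}+n_0+n_1=g$ and $-n_{-1}+n_1=2-g$; subtracting gives $n_0+2n_1=2$, whose only non-negative solutions are $(n_0,n_1)=(2,0)$ and $(0,1)$. These correspond precisely to
\[
N_{l/\bP}\cong\cO_l(-1)^{\oplus(g-2)}\oplus\cO_l^{\oplus 2}\quad\text{and}\quad N_{l/\bP}\cong\cO_l(-1)^{\oplus(g-2)}\oplus\cO_l(-1)\oplus\cO_l(1),
\]
which is exactly the claimed dichotomy. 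The hypothesis $p\neq q$ enters only through the separation of $\tilde p,\tilde q$ in the surjectivity step; the tangential case $p=q$ is genuinely different and is treated separately.
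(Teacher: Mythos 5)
Your proof is correct, and it reaches the lemma by a route that is related to but genuinely distinct from the paper's. Both arguments determine $N_{l/\bP}$ by comparing it with $N_{l_0/\PP^{g+1}}\cong\cO_{l_0}(1)^{\oplus g}$ via a local computation in blow-up charts, but the comparison runs in opposite directions. The paper, citing Lemma 1 of Kim--Lee--Oh, exhibits $N_{l/\bP}$ as an elementary modification \emph{up}: it contains $\pi^*N_{l_0/\PP^{g+1}}\otimes\cO(-E)|_l\cong\cO_l(-1)^{\oplus g}$ with cokernel the length-two skyscraper $\CC_p\oplus\CC_q$, identifies the two evaluation maps in local coordinates, and then reads off the splitting types (the enumeration of which bundles can arise from such a modification is left implicit). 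You instead exhibit $N_{l/\bP}$ as an elementary modification \emph{down}: the differential $d\pi$ gives an injection $N_{l/\bP}\hookrightarrow\pi^*N_{l_0/\PP^{g+1}}|_l\cong\cO_l(1)^{\oplus g}$ whose cokernel $\cT$ is, by your local matrix $\diag(1,x_1,\dots,x_1)$ (which matches the paper's coordinate setup), a skyscraper of length $g-1$ at each of $\tilde p,\tilde q$; you then pin down the splitting type by cohomology and degree alone: $H^0(N_{l/\bP}(-2))=0$ forces $a_i\le 1$, surjectivity of $H^0(\cO_l(1)^{\oplus g})\to H^0(\cT)$ (which is where $p\neq q$ enters, via separation of the two points by $\cO_l(1)$) forces $H^1(N_{l/\bP})=0$ hence $a_i\ge -1$, and $\deg N_{l/\bP}=2-g$ leaves exactly the two stated types. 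What each buys: the paper's sequence is shorter given the external citation and feeds directly into the subsequent corollary, since the two splitting types correspond visibly to whether the two one-dimensional modifications at $p$ and $q$ raise the same $\cO_l(-1)$ summand (the coplanar case) or different ones (the skew case); your argument is self-contained, and its cohomological-plus-numerical finish supplies rigorously the step the paper compresses into ``we obtain the result.''
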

\begin{proof}
Let $l_0$ be the line in $\PP^{g+1}$ cleanly intersecting $X$ at two points $\{p,q\}$. Let $l$ be the strict transformation of the line $l_0$ along the blow-up map $\pi: \bP=\mbox{bl}_X \PP^{g+1}\lr \PP^{g+1}$.
By the proof of \cite[Lemma 1]{KLO07}, one can see that the bundle $N_{l/\bP}$ fits into the following short exact sequence:
\[
\ses{\pi^*N_{l_0/\PP^{g+1}}\otimes \cO(-E)|_l}{N_{l/\bP}}{\CC_p\oplus \CC_q}.
\]
In fact, the last map is locally given by the followings(cf. \cite[Appendix B.6.10]{Ful84}). 

Let $T_1,...,T_{g+1}$ be a local coordinate of $\PP^{g+1}$ at $p$ such that locally $I_{l_0/\PP^{g+1}}=\langle T_1,T_3,...,T_{g+1} \rangle$, $I_{X/\PP^{g+1}}=\langle T_2,T_3,...,T_{g+1} \rangle$. Then, we have local coordinate $t_1,t_2,x_3,...,x_{g+1}$ of $\bP$ at $\tilde{p}$ which is a lift of $p$ in $l$ such that $\pi\circ T_1 = t_1, \pi\circ T_2 = t_2, \pi\circ T_i = t_2 \circ x_i$ for $3\leq  i \leq g+1$. Therefore we have $\pi^*I_{l_0/\PP^{g+1}}=\pi^*\langle t_1,t_3,...,t_{g+1} \rangle = \langle t_1,t_2x_3,...,t_2x_{g+1} \rangle$. Locally, $\langle t_2 \rangle$ is an ideal of exceptional divisor $E$. Therefore we have a short exact sequence.
\[
0 \to I_{l/\bP}\cdot I_{E/\bP} \to \pi^*I_{l_0/\PP^{g+1}} \to \pi^*I_{l_0/\PP^{g+1}}/I_{l/\bP}\cdot I_{E/\bP} \to 0. 
\]
Take pull-back this sequence to $l$, we have
\[
0 \to I_{l/\bP}/I_{l/\bP}^2 \otimes \cO_{\bP}(-E)\to \pi^*(I_{l_0/\PP^{g+1}}/I_{l_0/\PP^{g+1}}^2) \stackrel{\tilde{\partial}_p}{\longrightarrow} \CC_p \to 0.
\]
Here, the map $\tilde{\partial}_p$ is a differentiation by a tangent vector $\frac{\partial}{\partial t_1}$ in $T_p\bP$. Dualizing this sequence we have a map $N_{l/\bP} \to \CC_p$. Similarly, a map $N_{l/\bP} \to \CC_q$ can be defined.

From $N_{l_0/\PP^{g+1}}=\cO_{l_0}(1)^{\oplus g}$ and $\cO(-E)|_l=\cO_l(-2)$, we obtain the result.
\end{proof}
Similar to other Fano varieties, the normal bundle of the line can be described by the following geometric way.
\begin{coro}
If the projectivized tangent line $T_pX$ and $T_qX$ are co-planar (resp. skew lines), then $N_{l/\bP}\cong \cO_l(-1)^{\oplus (g-2)}\oplus \cO_l(-1)\oplus \cO_l(1)$ (resp. $N_{l/\bP}\cong \cO_l(-1)^{\oplus (g-2)}\oplus \cO_l^{\oplus 2}$).
\end{coro}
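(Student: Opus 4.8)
The strategy is to separate the two possibilities in Lemma~\ref{norline} by a single numerical invariant. Both candidate bundles have rank $g$ and degree $2-g$, so they are indistinguishable at the level of $\chi$ or of $h^0$; the first invariant that tells them apart is obtained after twisting down by $\cO_l(-1)$, since
\[
h^0\!\left(N_{l/\bP}^{\vee}\otimes\cO_l(-1)\right)=\begin{cases}g-1,&N_{l/\bP}\cong\cO_l(-1)^{\oplus(g-1)}\oplus\cO_l(1),\\ g-2,&N_{l/\bP}\cong\cO_l(-1)^{\oplus(g-2)}\oplus\cO_l^{\oplus2}.\end{cases}
\]
Thus it is enough to compute this one number, and I will express it as the rank of an explicit pair of linear functionals built from the tangent lines $T_pX$ and $T_qX$.

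The computation starts from the conormal sequence obtained by dualizing the sequence in the proof of Lemma~\ref{norline}, namely
\[
0\longrightarrow N_{l/\bP}^{\vee}\longrightarrow \pi^{*}N_{l_0/\PP^{g+1}}^{\vee}\otimes\cO(E)|_{l}\cong\cO_l(1)^{\oplus g}\stackrel{\tilde\partial_p\,\oplus\,\tilde\partial_q}{\longrightarrow}\CC_p\oplus\CC_q\longrightarrow0.
\]
Twisting by $\cO_l(-1)$ turns the middle term into the trivial bundle $\cO_l^{\oplus g}$, and taking global sections gives an exact sequence
\[
0\longrightarrow H^0\!\left(N_{l/\bP}^{\vee}\otimes\cO_l(-1)\right)\longrightarrow H^0(\cO_l^{\oplus g})=\CC^{g}\stackrel{(w_p,\,w_q)}{\longrightarrow}\CC^2,
\]
so that $h^0(N_{l/\bP}^{\vee}\otimes\cO_l(-1))=g-\mathrm{rank}(w_p,w_q)$. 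Here $w_p,w_q$ are the functionals induced on constant sections by $\tilde\partial_p,\tilde\partial_q$, and everything therefore reduces to deciding whether they are proportional or independent.

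To identify $w_p$ geometrically I would use the local coordinates of the proof of Lemma~\ref{norline}: there $\tilde\partial_p$ is differentiation by $\partial/\partial t_1$, and since $I_{X/\PP^{g+1}}=\langle T_2,\dots,T_{g+1} \rangle$ the direction $\partial/\partial T_1$ is exactly the tangent direction of $X$ at $p$ modulo $T_pl_0$. Writing $\PP^{g+1}=\PP(V)$ and $l_0=\PP(W)$ with $\dim W=2$, the bundle $N_{l_0/\PP^{g+1}}\otimes\cO_{l_0}(-1)$ is canonically trivial with fibre $V/W$; under this trivialization $w_p$ and $w_q$ become the images in $V/W$ of the two-dimensional subspaces $\widetilde W_p,\widetilde W_q\subset V$ with $T_pX=\PP(\widetilde W_p)$ and $T_qX=\PP(\widetilde W_q)$. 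Hence $\mathrm{rank}(w_p,w_q)=\dim\langle w_p,w_q\rangle$ is the dimension of the image of $\widetilde W_p+\widetilde W_q$ in $V/W$.

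Finally, since $p\neq q$, the lines $\widetilde W_p\cap W$ and $\widetilde W_q\cap W$ are the two distinct lines in $W$ corresponding to $p$ and $q$, so they span $W$; thus $W\subset\widetilde W_p+\widetilde W_q$ and the projection $V\to V/W$ restricted to $\widetilde W_p+\widetilde W_q$ has kernel exactly $W$. Therefore
\[
\dim\langle w_p,w_q\rangle=\dim(\widetilde W_p+\widetilde W_q)-2,
\]
which equals $1$ precisely when $T_pX$ and $T_qX$ meet (are coplanar) and $2$ precisely when they are skew. Combined with the dictionary of the first paragraph, this yields the two splitting types as claimed. I expect the only subtle step to be the geometric identification of $\tilde\partial_p$ with the tangent direction $T_pX$: one must check, against the explicit coordinates $t_1,t_2,x_i$ and the trivialization above, that the functional extracted by the blow-up is the one dual to the branch of $X$ rather than some other normal direction. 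Once this matching is in place, the splitting type is forced by the elementary dimension count.
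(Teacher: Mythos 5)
Your proof is correct, and it is essentially the argument the paper intends: the paper's own proof is a one-sentence deferral to ``a local computation similar to that in the proof of Lemma~\ref{norline}'', and your argument carries out exactly that computation, using the same exact sequence and the same identification of $\tilde{\partial}_p$ with differentiation along the branch of $X$ at $p$ (which is indeed the $\partial/\partial T_1$ direction in the paper's coordinates, since $I_{X/\PP^{g+1}}=\langle T_2,\dots,T_{g+1}\rangle$). Your packaging of the final step --- distinguishing the two splitting types by $h^0(N_{l/\bP}^{\vee}\otimes\cO_l(-1))=g-\dim\langle w_p,w_q\rangle$ and evaluating this via the canonical trivialization $N_{l_0/\PP^{g+1}}\otimes\cO_{l_0}(-1)\cong (V/W)\otimes\cO_{l_0}$, so that coplanarity of $T_pX$ and $T_qX$ corresponds to $\dim(\widetilde W_p+\widetilde W_q)=3$ and hence to rank one --- is just a clean global reformulation of the same local computation, not a different method.
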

\begin{proof}
One can see the results by using a local computation similar to that in the proof of Lemma \ref{norline}.
\end{proof}
Let $Q$ be the smooth conic in the exceptional divisor $E$. Since $Q\subset \PP^{g-1}$ for some fiber of the projective bundle $E=\PP(N_{X/\PP^{g+1}})\lr X$, we know that 
\[N_{Q/E}\cong N_{Q/\PP^{g-1}}\oplus N_{\PP^{g-1}/E}|_Q\cong (\cO_Q(2)\oplus\cO_Q(1)^{\oplus (g-3)})\oplus \cO_Q\] because $\rH^1(\cO_Q(i))=0$ for $i=1,2$. Therefore the nested bundle sequence
\begin{equation}\label{eq8}
\ses{N_{Q/E}}{N_{Q/\bP}}{N_{E/\bP}|_Q\cong \cO_Q(-1)}
\end{equation}
says that 
\begin{equation}\label{eq9}
N_{Q/\bP}\cong (\cO_Q(2)\oplus\cO_Q(1)^{\oplus (g-3)})\oplus \cO_Q\oplus \cO_Q(-1).
\end{equation}
\begin{prop}\label{obst}
Let $[C]\in \Gamma_2$ be the point such that $C=l\cup Q$ for a line $l$ and a smooth conic $Q$ meeting at a point $z$. Then $\rH^1(N_{C/\bP})=0$.
\end{prop}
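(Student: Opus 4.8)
The plan is to use that a nodal curve is a local complete intersection, so that $N_{C/\bP}$ is a locally free sheaf of rank $g=\dim\bP-1$ on $C$. Tensoring the Mayer--Vietoris sequence $\ses{\cO_C}{\cO_l\oplus\cO_Q}{\cO_z}$ with this bundle gives
\[
0\lr N_{C/\bP}\lr N_{C/\bP}|_l\oplus N_{C/\bP}|_Q\lr N_{C/\bP}|_z\lr 0,
\]
with $N_{C/\bP}|_z\cong\CC^{\,g}$. Taking cohomology and using $\rH^1(\CC_z)=0$, the group $\rH^1(N_{C/\bP})$ vanishes if and only if (i) $\rH^1(N_{C/\bP}|_l)=\rH^1(N_{C/\bP}|_Q)=0$ and (ii) the evaluation $\rH^0(N_{C/\bP}|_l)\oplus\rH^0(N_{C/\bP}|_Q)\to N_{C/\bP}|_z$ is surjective. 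So the whole problem reduces to understanding the two restricted bundles and their sections at the node $z$.

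First I would pin down the restricted bundles by a local computation at $z$. Choosing coordinates $u,v,w_1,\dots,w_{g-1}$ on $\bP$ in which $l$ is the $u$-axis, $Q$ is the $v$-axis and the exceptional divisor is $E=\{u=0\}$, one finds $I_C=(uv,w_1,\dots,w_{g-1})$ and short exact sequences
\[
\ses{N_{l/\bP}}{N_{C/\bP}|_l}{\CC_z},\qquad \ses{N_{Q/\bP}}{N_{C/\bP}|_Q}{\CC_z},
\]
exhibiting $N_{C/\bP}|_l$ (resp. $N_{C/\bP}|_Q$) as the elementary up-modification of $N_{l/\bP}$ (resp. $N_{Q/\bP}$) at $z$, the modification direction being the tangent of the opposite branch. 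Since every summand of $N_{l/\bP}$ has degree $\ge -1$ by Lemma \ref{norline}, $\rH^1(N_{l/\bP})=0$, and the first sequence gives $\rH^1(N_{C/\bP}|_l)=0$ at once.

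The heart of the argument is the vanishing $\rH^1(N_{C/\bP}|_Q)=0$, and this is where the geometry of the blow-up is essential. By \eqref{eq8} and \eqref{eq9}, $N_{Q/\bP}$ contains $N_{Q/E}=\cO_Q(2)\oplus\cO_Q(1)^{\oplus(g-3)}\oplus\cO_Q$ as a subbundle with quotient $N_{E/\bP}|_Q\cong\cO_Q(-1)$, and it is this single negative summand that produces $\rH^1(N_{Q/\bP})\cong\CC$. The crucial input is that $l$ meets $E$ transversally at $z$, so that $T_zl$ maps isomorphically onto the fibre $N_{E/\bP}|_z$; hence the $T_zl$-modification is carried isomorphically to the quotient $\cO_Q(-1)$ and up-modifies precisely the bad summand. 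I would make this into the sequence $\ses{N_{Q/E}}{N_{C/\bP}|_Q}{N_{E/\bP}|_Q(z)}$, where $N_{E/\bP}|_Q(z)\cong\cO_{\PP^1}(-1)$. As $\rH^1(N_{Q/E})=0$ (all summands have nonnegative degree) and $\rH^1(\cO_{\PP^1}(-1))=0$, the extension splits and $\rH^1(N_{C/\bP}|_Q)=0$. Proving that the modification really lands on the $\cO_Q(-1)$ factor, rather than being absorbed into $N_{Q/E}$, is the main obstacle, and is exactly the point that uses $Q\subset E$ together with $l\pitchfork E$.

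It remains to verify the surjectivity in (ii), which I would do by showing the two components reach complementary directions of $N_{C/\bP}|_z$. In the local basis, $N_{C/\bP}|_z=\langle e_0\rangle\oplus\langle e_1,\dots,e_{g-1}\rangle$, where $e_0$ is dual to the generator $uv$ (the node-smoothing direction) and the $e_i$ are the honest normal directions. On the $Q$-side the splitting $N_{C/\bP}|_Q\cong N_{Q/E}\oplus\cO_{\PP^1}(-1)$ together with the global generation of $N_{Q/E}$ shows that the image of $\rH^0(N_{C/\bP}|_Q)$ is exactly $\langle e_1,\dots,e_{g-1}\rangle$ and misses $e_0$. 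On the $l$-side, $\rH^1(N_{l/\bP})=0$ forces $\rH^0(N_{C/\bP}|_l)\to\rH^0(\CC_z)=\CC$ to be surjective, so some section of $N_{C/\bP}|_l$ evaluates at $z$ with nonzero $e_0$-component. The two images therefore span $N_{C/\bP}|_z$, the evaluation is surjective, and $\rH^1(N_{C/\bP})=0$. As a consistency check, $\chi(N_{C/\bP})=\deg N_{C/\bP}+g=g+g=2g$, so $h^0(N_{C/\bP})=2g$, matching $\dim\overline\Gamma_2$; the tangential members of $\Gamma_2$ are treated identically, since the strict transform of the tangent line still crosses $E$ transversally.
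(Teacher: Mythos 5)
Your core argument is correct exactly in the situation where the paper actually uses this proposition, namely for curves of type (4) in Lemma \ref{clfy}: $\pi(l)$ meets $X$ at two distinct points, so that $l$ crosses the exceptional divisor $E$ transversally at the node $z$. Your organization is genuinely different from the paper's: you tensor the gluing sequence $\ses{\cO_C}{\cO_l\oplus\cO_Q}{\CC_z}$ directly with $N_{C/\bP}$ and verify (i) $\rH^1$-vanishing on each component and (ii) surjectivity of evaluation at the node, whereas the paper runs the conormal sequence $\ses{N^*_{C/\bP}}{\Omega_{\bP}|_C}{\Omega_C}$ through an $\Ext$-diagram and reduces everything to $\rH^1(N_{C/\bP}|_Q)=0$, the gluing/evaluation input being packaged instead in Lemma \ref{obst2} for $T_{\bP}|_C$. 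The decisive geometric point is identical in both proofs: because $T_zl$ surjects onto $N_{E/\bP}|_z$, the elementary modification of $N_{Q/\bP}$ at $z$ in the direction $T_zl$ raises precisely the unique negative summand $N_{E/\bP}|_Q\cong\cO_Q(-1)$ of \eqref{eq8}--\eqref{eq9}; your sequence $\ses{N_{Q/E}}{N_{C/\bP}|_Q}{N_{E/\bP}|_Q(z)}$ is the dual of the paper's computation of $N^*_{C/\bP}|_Q$. What your route buys is a self-contained normal-bundle argument (plus the useful check $h^0(N_{C/\bP})=2g=\dim\overline{\Gamma}_2$); what the paper's route buys is that it never needs your step (ii) for $N_{C/\bP}$ itself.

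Your final sentence, however, is a genuine error. The strict transform of a line tangent to $X$ at $p$ does \emph{not} cross $E$ transversally: it meets $E$ at a single point with multiplicity two, i.e.\ it is tangent to $E$, so $T_zl\subset T_zE$. Consequently the modification direction lands inside the subspace $N_{Q/E}|_z\subset N_{Q/\bP}|_z$, the modification is absorbed into $N_{Q/E}$, and the quotient $N_{E/\bP}|_Q\cong\cO_Q(-1)$ (of intrinsic degree $-2$ on $Q\cong\PP^1$) survives unmodified as a quotient of $N_{C/\bP}|_Q$; hence $\rH^1(N_{C/\bP}|_Q)\neq0$, and since your Mayer--Vietoris sequence makes $\rH^1(N_{C/\bP})$ surject onto $\rH^1(N_{C/\bP}|_l)\oplus\rH^1(N_{C/\bP}|_Q)$, also $\rH^1(N_{C/\bP})\neq0$ for these curves. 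This is not a cosmetic issue: by Theorem \ref{comp} the tangential (type (5)) stable maps constitute $\overline{\Gamma}_1\cap\overline{\Gamma}_2$, so they are singular points of the moduli space and cannot be unobstructed; this is exactly why the paper invokes Proposition \ref{obst} only for type (4) maps with smooth conic. You should delete the claim that tangential members are treated identically --- or better, record that the vanishing fails for them, since that failure is what makes the description of $\overline{\Gamma}_1\cap\overline{\Gamma}_2$ consistent.
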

\begin{proof}
Since the curve $C$ has only a nodal singularity, the conormal sheaf $N_{C/\bP}^*:=I_{C/\bP}/I_{C/\bP}^2$ is locally free.
From the two short exact sequences:
\begin{itemize}
\item $\ses{N_{C/\bP}^*}{\Omega_{\bP}|_C}{\Omega_{C}}$ and
\item $\ses{\cO_l(-1)}{\cO_C}{\cO_Q}$,
\end{itemize}
we have the following commutative diagram:
\[
\xymatrix{\Ext^1(\Omega_C, \cO_C)\ar[r]\ar@{->>}[d]&\Ext^1(\Omega_{\bP}, \cO_C)\ar[r]\ar[d]^{\cong}&\Ext^1(N_{C/\bP}^*,\cO_C)\ar[r]\ar[d]&0\\
\Ext^1(\Omega_C, \cO_Q)\ar[r]&\Ext^1(\Omega_{\bP}, \cO_Q)\ar[r]&\Ext^1(N_{C/\bP}^*,\cO_Q)\ar[r]&0.}
\]
Since $C=l\cup Q$ has the unique node point $z$, $\Ext^1(\Omega_C, \cO_C)\cong \CC$. Also, $\Ext^2(\Omega_C, \cO_L(-1))=0$ implies that the first vertical map is surjective.
By Lemma \ref{obst2}, the second vertical map $\rH^1(T_{\bP}|_C)\cong\Ext^1(\Omega_{\bP}, \cO_C)\lr \Ext^1(\Omega_{\bP}, \cO_Q)\cong \rH^1(T_{\bP}|_Q)=\CC$ is an isomorphism. Hence the claim holds whenever $\rH^1(N_{C/\bP}|_Q)=0$.
Let \[0\lr N_{C/\bP}^*|_Q\lr N_{Q/\bP}^*\stackrel{\partial_z}{\lr} \CC_z\lr0\] be the structure sequence where the map $\partial_z$ is defined by a differentiation by the tangent vector $T_zl$.
This can be shown by the following local computation. We can take local coordinate $x_1,...,x_{g+1}$ of $\bP$ at $z$. where $I_{Q/\bP}=\langle x_2,x_3,...,x_{g+1} \rangle, I_{l/\bP}= \langle x_1,x_3,...,x_{g+1} \rangle$. Then, $I_{C/\bP}= \langle x_1x_2,x_3,...,x_{g+1} \rangle$. Consider the exact sequene
\[ 
0 \to I_{C/\bP} \to I_{Q/\bP} \to I_{Q/\bP}/I_{C/\bP} \to 0
\]
Taking pull-back to $Q$, we have
\[
0 \to I_{C/\bP}/I_{C/\bP}|_Q \to I_{Q/\bP}/I_{Q/\bP}^2 \to \CC_z \to 0
\]
Here, $I_{Q/\bP}^2 \to \CC_z$ is given by the differentiation by $T_pl$ since it annihilates the local coordinates $x_1,x_3,...,x_{g+1}$.
Then one can check that the composition map $\cO_Q(1)\cong N_{E/\bP}^*|_Q\subset N_{Q/\bP}^*\stackrel{r}{\lr} \CC_p$ by \eqref{eq8} is not zero because $l$  transversally meets with the exceptional divisor $E$. Hence one can easily see that $N_{C/\bP}^*|_Q\cong \cO_Q(s) \oplus N^*_{Q/E}$ for some $s\in Q$. Since $\rH^1(\cO_Q(-r))=\rH^1(N_{Q/E})=0$, we proved the claim.
\end{proof}
\begin{lemm}\label{obst2}(cf. \cite[Lemma 6.4]{Kie07})
Let
\[
H^0(T{\bP}|_l)\oplus H^0(T{\bP}|_Q) \stackrel{\alpha}{\lr}\rH^0(T_p{\bP}) \lr H^1(T{\bP}|_C) \lr H^1(T{\bP}|_l)\oplus H^1(T{\bP}|_Q)\lr 0.
\]
be the long exact sequence coming from $\ses{\cO_C}{\cO_l\oplus \cO_Q}{\CC_p}$. Then the map $\alpha$ is surjective and thus $\rH^1(T{\bP}|_C)\cong H^1(T{\bP}|_Q)\cong \CC$.
\end{lemm}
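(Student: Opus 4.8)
The plan is to read $\alpha$ as an evaluation-at-the-node map and to show that the contributions of $l$ and $Q$ already span $T_p\bP$ on geometric grounds; the asserted value of $\rH^1$ then drops out formally. Tensoring $\ses{\cO_C}{\cO_l\oplus\cO_Q}{\CC_p}$ with $T\bP$ and taking cohomology realizes the displayed sequence, in which $\alpha$ is the difference of the two evaluation maps at the node, $(s_l,s_Q)\mapsto s_l(p)-s_Q(p)$. Writing $\mathrm{ev}^l_p\colon \rH^0(T\bP|_l)\to T_p\bP$ and $\mathrm{ev}^Q_p\colon \rH^0(T\bP|_Q)\to T_p\bP$ for the restriction maps, one has $\mathrm{im}(\alpha)=\mathrm{im}(\mathrm{ev}^l_p)+\mathrm{im}(\mathrm{ev}^Q_p)$, so it suffices to prove that this sum is all of $T_p\bP$.

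First I would pin down $\mathrm{im}(\mathrm{ev}^Q_p)$ using \eqref{eq9}. In $\ses{T_Q}{T\bP|_Q}{N_{Q/\bP}}$ one has $N_{Q/\bP}\cong \cO_Q(2)\oplus\cO_Q(1)^{\oplus(g-3)}\oplus\cO_Q\oplus\cO_Q(-1)$, where by \eqref{eq8} the first three summands make up $N_{Q/E}$ and the last is $N_{E/\bP}|_Q$. Every summand of $N_{Q/E}$ is globally generated while $\cO_Q(-1)$ has no sections, so the image of $\rH^0(N_{Q/\bP})$ in the fibre $N_{Q/\bP,p}$ is exactly the hyperplane $N_{Q/E,p}$. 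Together with the surjection $\rH^0(T_Q)\twoheadrightarrow T_pQ$ this forces $\mathrm{im}(\mathrm{ev}^Q_p)$ to be the unique $g$-dimensional subspace of $T_p\bP$ containing $T_pQ$ and projecting onto $N_{Q/E,p}$, namely the tangent hyperplane $T_pE$ of the exceptional divisor at $p$.

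Next I would use transversality to supply the one missing direction. Since $l$ is the strict transform of a line meeting $X$, it crosses $E$ transversally at $p$, so $T_pl\not\subset T_pE$; moreover $T_pl\subset\mathrm{im}(\mathrm{ev}^l_p)$, because the globally generated subbundle $T_l\cong\cO_l(2)\hookrightarrow T\bP|_l$ evaluates onto $T_pl$. Hence $\mathrm{im}(\mathrm{ev}^l_p)+\mathrm{im}(\mathrm{ev}^Q_p)\supseteq T_pE+\CC\cdot T_pl=T_p\bP$, which proves $\alpha$ surjective. Surjectivity of $\alpha$ makes $\mathrm{coker}(\alpha)=0$, so the long exact sequence gives $\rH^1(T\bP|_C)\cong\rH^1(T\bP|_l)\oplus\rH^1(T\bP|_Q)$; here $\rH^1(T\bP|_l)=0$ since $\rH^1(T_l)=0$ and each summand of $N_{l/\bP}$ in Lemma \ref{norline} has degree $\geq -1$, while $\rH^1(T\bP|_Q)\cong\CC$ since in $\ses{T_Q}{T\bP|_Q}{N_{Q/\bP}}$ only the $\cO_Q(-1)$-summand contributes to $\rH^1$. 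This yields $\rH^1(T\bP|_C)\cong\CC$.

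The only genuinely non-formal input is the identification $\mathrm{im}(\mathrm{ev}^Q_p)=T_pE$ and the transversality $T_pl\not\subset T_pE$; the point I expect to need the most care is recognizing that the single summand of $N_{Q/\bP}$ in \eqref{eq9} carrying no sections is precisely $N_{E/\bP}|_Q$, so that the one direction of $T_p\bP$ that $Q$ fails to reach is exactly the direction out of the exceptional divisor that $l$ crosses.
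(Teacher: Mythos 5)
Your proof is correct and takes essentially the same route as the paper's: both read $\alpha$ as evaluation at the node, show that sections over the conic reach exactly $T_pE$, and use transversality of $l$ with $E$ to supply the one remaining direction $T_pl$ via the globally generated subsheaf $T_l\subset T\bP|_l$. The only (cosmetic) difference is in the bookkeeping for the conic: the paper splits $T\bP|_Q$ along the flag $Q\subset \PP^{g-1}\subset E$ using $N_{\PP^{g-1}/\bP}|_Q\cong \cO_Q\oplus\cO_Q(-1)$, whereas you run the normal bundle sequence $\ses{T_Q}{T\bP|_Q}{N_{Q/\bP}}$ and invoke the splitting \eqref{eq9}; the final $\rH^1$ identifications likewise coincide with the paper's appeal to Lemma \ref{norline} and \eqref{eq9}.
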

\begin{proof}
$T_p\bP\cong T_pl\oplus T_pE$ since $l$ and $E$ transverally meets at the point $p$. From $\rH^0(T\bP|_l)=\rH^0(Tl)\oplus \rH^0(N_{l/\bP})$, we see that 
\begin{equation}\label{eq11}
\rH^0(T\bP|_l)\twoheadrightarrow \rH^0(T_pl)\end{equation} is surjective by the positive degree part $\rH^0(Tl)=\rH^0(\cO_l(2))$. 

On the other hand, $\rH^0(T\bP|_Q)=\rH^0(T\PP^{g-1}|_Q)\oplus \rH^0(N_{\PP^{g-1}/\bP}|_Q)$ from $Q\subset \PP^{g-1}\subset E$. One can easily see that the first factor $$\rH^0(T\PP^{g-1}|_Q)\twoheadrightarrow \rH^0(T_p\PP^{g-1})$$ is surjective. Also, by a simple computation, one can see that $N_{\PP^{g-1}/\bP}|_Q\cong \cO_Q\oplus \cO_Q(-1)$ where $N_{\PP^{g-1}/E}|_Q=\cO_Q$ and thus the positive degree part $\rH^0(N_{\PP^{g-1}/\bP}|_Q)=\rH^0(N_{\PP^{g-1}/E}|_Q)$ goes to $\rH^0(N_{\PP^{g-1}/E,p})=\CC$. Hence 
\begin{equation}\label{eq12}
\rH^0(T{\bP}|_Q)=\rH^0(T\PP^{g-1}|_Q)\oplus \rH^0(N_{\PP^{g-1}/\bP}|_Q)\twoheadrightarrow \rH^0(T_p\PP^{g-1})\oplus \rH^0(N_{\PP^{g-1}/E,p})=\rH^0(T_pE)\end{equation} is surjective. By \eqref{eq11} and \eqref{eq12}, the map $\alpha$ is surjective.
The last isomorphisms comes from  Lemma \ref{norline} and the equation \eqref{eq9}.
\end{proof}
Now we are ready to prove our main theorem.
\begin{proof}[Proof of Theorem \ref{comp}]
$\Gamma_1$  is isomorphic to an open subset of $Gr(2,g+2)$ which corresponds to a line in $\PP^{g+1}$ that does not meet $X$. So $\overline{\Gamma}_1$ is irreducible. Also $\Gamma_2$ is irreducible by the proof of Lemma \ref{clfy}.
By Lemma \ref{clfy},  both $\overline{\Gamma}_1$ and $\overline{\Gamma}_2$ have dimension $2g$, which agrees with the expected dimension. Moreover there is no other component which has dimension $\geq 2g$ (\cite[Proof of corollary 4.6]{KL13}). So $\overline{\Gamma}_1$ and $\overline{\Gamma}_2$ are all the irreducible components of $\bM_0(\bP,\beta)$. The loci of stable maps of type (1), (2) and (3) are not contained in $\overline{\Gamma}_2$, so it should be contained in  $\overline{\Gamma}_1$. The loci of stable maps of type (4) and (5) are contained in $\overline{\Gamma}_2$ by definition.

Since all irreducible components of $\bM_0(\bP,\beta)$ have dimension $2g$, it is a locally complete intersection by the proof of \cite[Corollary 4.6]{KL13}. A stable map of type (3) degenerates to a gluing of proper transform of a line in $\PP^{g+1}$ which is tangent to $X$ and a degenerate conic in $\pi^{-1}(p)\cong\PP^{g-1}$ when $p \to q$, which is a stable map of type (4). Hence $\overline{\Gamma}_1\cap\overline{\Gamma}_2 \neq \phi$. Also since there are only two irreducible components $\overline{\Gamma}_1$ and $\overline{\Gamma}_2$, $\overline{\Gamma}_1\cap\overline{\Gamma}_2$ is pure of dimension $2g-1$ by Hartshorne's connectedness theorem(\cite[Theorem 3.4]{Har62}).

By the proof of Proposition $\ref{obst}$, a stable map of type (4) whose conic component is smooth has no obstruction. So it is a smooth point in the moduli. So it cannot be contained in $\overline{\Gamma}_1\cap 
\overline{\Gamma}_2$. The sublocus of type (4) whose conic component is singular has dimension $2g-2$. Since the locus of stable maps of type (5) is dimension $2g-1$ and obviously irreducible, we conclude that $\overline{\Gamma}_1\cap \overline{\Gamma}_2$ parametrizes stable maps of type (5).
\end{proof}

\bibliographystyle{amsplain}

\end{document}